 \tikzset{help lines/.style={step=#1cm,very thin, color=gray},
help lines/.default=.5} 
\tikzset{thick grid/.style={step=#1cm,thick, color=gray},
thick grid/.default=1} 
\newtheorem{thm}{Theorem}[section]
\newtheorem{lem}[thm]{Lemma}
\newtheorem{cor}[thm]{Corollary}
\newtheorem{prop}[thm]{Proposition}
\newenvironment{customthm}[1]
  {\innercustomthm}{\endinnercustomthm}
\theoremstyle{definition}
\newtheorem{defn}[thm]{Definition}
\theoremstyle{remark}
\newtheorem{rem}[thm]{Remark}
\numberwithin{equation}{section}
 \newcommand{\onto}{\twoheadrightarrow}
\DeclareMathOperator{\colim}{colim}%
\newcommand{\field}[1]{\mathbb{#1}}
\newcommand{\ZZ}{\ensuremath{{\field{Z}}}}
\newcommand{\CC}{\ensuremath{{\field{C}}}}
\newcommand{\RR}{\ensuremath{{\field{R}}}}
\newcommand{\commentout}[1]{}
\newcommand{\cC}{\ensuremath{{\mathcal{C}}}}
\newcommand{\cD}{\ensuremath{{\mathcal{D}}}}
\newcommand{\cE}{\ensuremath{{\mathcal{E}}}}
\newcommand{\cF}{\ensuremath{{\mathcal{F}}}}
\newcommand{\cP}{\ensuremath{{\mathcal{P}}}}
\title{Second obstruction to pseudoisotopy I}
\author{Kiyoshi Igusa}
\address{Department of Mathematics, Brandeis University, Waltham, MA 02454}\email{igusa@brandeis.edu}
\thanks{Supported by the Simons Foundation}
\subjclass[2020]{
19J10: 58K60}
\keywords{Morse theory, 4-manifolds, diffeomorphisms}
\begin{document}

\begin{abstract} A pseudoisotopy of $M$ is a diffeomorphism of $M\times I$ which is the identity on $M\times 0$. We give an explicit construction of pseudoisotopies of 4-manifolds which realize certain elements of the ``second obstruction to pseudoisotopy''. The next paper will give precise statements of our results for 5-manifolds which had been announced long ago.
\end{abstract}

\maketitle

\tableofcontents

\section*{Introduction}

Recall that, for any smooth compact manifold $M$, a \emph{pseudoisotopy} of $M$ is a diffeomorphism of $M\times I$ which is the identity on $M\times 0\cup \partial M\times I$. The restriction of this diffeomorphism to $M\times 1$ gives a diffeomorphism of $M$ which is the identity on $\partial M$. This diffeomorphism is said to be \emph{pseudoisotopic} to the identity diffeomorphism of $M$ rel $\partial M$. Let $\cC(M)$ be the group of pseudoisotopies of $M$. Thus
\[
\cC(M)=Dif\!f(M\times I\,rel\, M\times 0\cup \partial M\times I).
\]
We give $\cC(M)$ a suitable topology so that, up to canonical homotopy, a path in $\cC(M)$ is given by a diffeomorphism of $M\times I\times I$ which preserves the last coordinate. (Section 1.) Since $\cC(M)$ is a topological group, $\pi_0\cC(M)$ will be a group.

For $M$ simply connected with $\dim M\ge5$, Jean Cerf \cite{Cerf} showed that $\cC(M)$ is connected. For $\dim M\ge 6$, Hatcher and Wagoner \cite{HW} showed that there is an exact sequence
\begin{equation}\label{HW exact sequence}
	Wh_1^+(\pi_1M;\ZZ_2\oplus \pi_2M)\to \pi_0\cC(M)\to Wh_2(\pi)\to 0.
\end{equation}

When $\dim M\le 5$, we apply a ``suspension functor'' $\sigma:\cC(M)\to \cC(M\times I)$ several times to get to the \emph{stable pseudoisotopy space} $\cP(M)=\colim \cC(M\times I^N)$ \cite{Stability} which should be used instead of $\cC(M)$ in the above exact sequence.

In \cite{What happens}, \cite{A-infty} and \cite{I:thesis} the sequence \eqref{HW exact sequence} was extended to an exact sequence
\begin{equation}\label{extended HW exact sequence}
	K_3\ZZ[\pi_1M]\xrightarrow\chi Wh_1^+(\pi_1M;\ZZ_2\oplus \pi_2M)\xrightarrow\theta \pi_0\cP(M)\to Wh_2(\pi)\to 0.
\end{equation}

 The question arises: Which elements of $\pi_0\cP(M)$ are realized when $\dim M<6$, i.e., which elements of $\pi_0\cP(M)$ lie in the image of the stabilization map $\pi_0\cC(M)\to\pi_0\cP(M)$? This question was answered in dimension 5 by the author in a preliminary version of \cite{Wh1b} and partially answered in the topological category in dimension 4 by Kwasik in \cite{Kwasik} using the techniques of \cite{HW} and one idea from \cite{Wh1b}. In both \cite{Wh1b} and \cite{Kwasik} we need an assumption which trivializes the image of $\chi$ in \eqref{extended HW exact sequence}. 
  
 This series of papers discusses this question for $\dim M=4,5$. In the current paper we show that, for certain $4$-dimensional manifolds $M$, some elements of the second obstruction group 
 \[
 	Wh_1^+(\pi_1M;\ZZ_2\oplus \pi_2M)=Wh_1^+(\pi_1M;\ZZ_2)\oplus Wh_1^+(\pi_1M; \pi_2M)
 \]
can be realized by pseudoisotopies of $M$. Given a group $G$ and $G$-module $A$, $Wh_1^+(G;A)$ is a quotient of $A[G]/A[e]=A\otimes \ZZ[G]/\ZZ[e]$ (Definition \ref{def: Wh1+ Z}). In particular, $Wh_1^+(\pi_1M; \pi_2M)$ is generated by elements $\alpha[\sigma]$ where $\alpha\in \pi_2M$ and $\sigma$ is a nontrivial element of $\pi_1M$.

\begin{customthm}{A}[Theorem \ref{prop: realizing Wh1pi2}]\label{thm A}
Let $M$ be the connected sum of $\CC P^2$ with a nonsimply connected $4$-manifold $X$. Then the elements of $Wh_1^+(\pi_1M; \pi_2M)$ of the form $\alpha[\sigma]$ where $\alpha$ is represented by the $2$-sphere $\CC P^1\subset \CC P^2$ lift to $\pi_0\cC(M)$ and map to nontrivial elements of the kernel of $\pi_0\cP(M)\onto Wh_2(\pi)$.
\end{customthm}

The proof relies on the fact that $\CC P^1\subset \CC P^2$ is an embedded $2$-sphere which gives an element of $\pi_2M$ which is not hit by the Postnikov invariant $k_1\in H^3(\pi_1M;\pi_2M)$. We know by the formula for $\chi$ given in \cite{What happens} that this element of $\pi_2M$, together with any nontrivial element of $\pi_1M$, gives an element of the second obstruction group $Wh_1^+(\pi_1M;\pi_2M)$ whose image in $\pi_0\cP(M)$ is nontrivial. The assertion in this paper is that this second obstruction element of $\pi_0\cP(M)$ lifts to $\pi_0\cC(M)$ where $\dim M=4$.

The proof relies on the space $\cD(M)$ of ``lens-space models'' for pseudoisotopies of $M$ (Definition \ref{def: eye shaped graphic}). This is the space considered by Hatcher and Wagoner to determine the second obstruction to pseudoisotopy. We also consider the space $\cD_0(M)$ of ``marked lens-space models'' for $M$ since $\pi_0\cD_0(M)$ forms a group (Lemma \ref{lem: D0(M) is a group}).

We recall that $Wh_1^+(\pi_1M;\ZZ_2)$, is $\ZZ_2[\pi_1M]/\ZZ_2[e]$ modulo the conjugation action of $\pi_1M$. For example, when $\pi_1M$ is finite,
\[
Wh_1^+(\pi_1M;\ZZ_2)=\bigoplus_{c-1}\ZZ_2
\]
is the direct sum of $c-1$ copies of $\ZZ_2$ where $c$ is the number of conjugacy classes of elements of $\pi_1M$. In dimension 4, we do not know whether all elements of $Wh_1^+(\pi_1M;\ZZ_2)$ can be realized by pseudoisotopies of $M$ and any realization might have infinite order. We show in this paper that there are homomorphisms $\widetilde \theta_0:\pi_0 \cD_0(M)\to \pi_0\cC(M)$ and $\overline\lambda_0:\pi_0\cD_0(M)\to Wh_1^+(\pi_1M;\ZZ_2\oplus\pi_2M)$ making the following diagram commute. Furthermore, when $M=X\#\CC P^2$, $\overline\lambda_0$ maps onto $Wh_1^+(\pi_1M;\ZZ_2)$. 
\[
\xymatrix{
\pi_0\cD_0(M) \ar[d]_{\overline\lambda_0}\ar[r]^(.6){\widetilde \theta_0} &
	\pi_0\cC(M)\ar[d]\\
Wh_1^+(\pi_1M;\ZZ_2\oplus \pi_2M) \ar[r]^(.6)\theta& 	\pi_0\cP(M)
	}
\]
This proves the second main theorem:
\begin{customthm}{B}[Theorem \ref{thm: realizing Wh1+Z2}]\label{thm B}
Let $M$ be the connected sum of $\CC P^2$ with a nonsimply connected $4$-manifold $X$. Then every elements in the image of $Wh_1^+(\pi_1M;\ZZ_2)\to \pi_0\cP(M)$ lift to $\pi_0\cC(M)$.
\end{customthm}

The same proof works in a slightly more general case, when $M$ has an embedded $2$-sphere with odd self-intersection number. (Theorem \ref{thm: realizing Wh1+Z2 b}.) A topological version of this result for smooth $4$-manifolds was obtained by Kwasik \cite[Proposition 4.1]{Kwasik}.

\section{Definition of second obstruction to pseudoisotopy}

We will review the basic theory, due to Cerf \cite{Cerf}, relating diffeomorphisms of $M\times I$ and Morse theory. Then we focus on the second obstruction, due to Hatcher and Wagoner \cite{HW}. We assume in this section that $M$ is a compact connected smooth manifold of dimension $n\ge4$.

\subsection{Basic setup}

Let $\cF(M)$ be the space of all smooth functions $f:M\times I\to I$ so that $f$ is equal to projection to $I$ in some neighborhood of 
\[
\partial (M\times I)=\partial M\times I\cup M\times 0\cup M\times 1.
\]
Then $\cF(M)$ is clearly contractible: $\cF(M)\simeq\ast$. Let $\cE(M)$ be the space of all $f\in\cF(M)$ with no singularities. Then, Cerf observed that
\[
	\cE(M)\simeq \cC(M)
\]
where the homotopy equivalence $\cC(M)\to \cE(M)$ is given by sending $g:M\times I\to M\times I$ to $\pi_I\circ g:M\times I\to I$ where $\pi_I:M\times I\to I$ is the projection map. Given $f\in \cE(M)$, the corresponding element of $\cC(M)$ is constructed uniquely up to contractible choice by integrating the gradient of $f$. As a consequence
\[
	\pi_0\cC(M)\cong \pi_1(\cF(M),\cE(M)).
\]
For clarity we repeat: This bijection sends the isotopy class of $g\in \cC(M)$ to the homotopy class of the 1-parameter family $f_t$ if and only if $f_1=\pi_I\circ g$ (and $f_0=\pi_I$).

\begin{prop}\label{prop: lateral composition of ft}
Let $(f_t^a),(f_t^b)$ be elements of $\pi_1(\cF(M),\cE(M))$ corresponding to $g^a,g^b\in\pi_0\cC(M)$. Thus $f_1^a=\pi_I\circ g^a$, $f_1^b=\pi_I\circ g^b$. Then the product in $\pi_0\cC(M)$, given by composition: $g^a\circ g^b$ is $f_t^{ab}$ given by putting $f_t^b$ to the left of $f_t^a$:
\[
	f_t^{ab}=\begin{cases} f_{2t}^b & \text{if } t\le \frac12\\
    f_{2t-1}^a\circ g^b& \text{if } t>\frac12.
    \end{cases}
\]
\end{prop}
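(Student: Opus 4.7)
The plan is to use the fact that the identification $\pi_0 \cC(M) \cong \pi_1(\cF(M),\cE(M))$ is determined entirely by the endpoint of a representing path. Since $\cF(M)\simeq\ast$, the long exact sequence of the pair collapses and evaluation at $t=1$ gives a bijection $\pi_1(\cF(M),\cE(M))\xrightarrow{\cong}\pi_0\cE(M)$, which then composes with the gradient-integration isomorphism $\pi_0\cE(M)\cong\pi_0\cC(M)$. Consequently, to prove the proposition it suffices to check (i) that the prescribed family $(f_t^{ab})$ is a legitimate path in $\cF(M)$ landing in $\cE(M)$, and (ii) that its endpoint is $\pi_I\circ (g^a\circ g^b)$.

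For (i), the two halves of $(f_t^{ab})$ match at $t=1/2$ because $f_0^a\circ g^b=\pi_I\circ g^b=f_1^b$; after a standard reparametrization making each of $(f_t^a)$ and $(f_t^b)$ stationary in a neighborhood of $t=0,1$, the concatenation becomes smooth. To see that each $f_{2t-1}^a\circ g^b$ lies in $\cF(M)$, note that $g^b$ is the identity near $M\times 0$ and $\partial M\times I$, so the composition agrees with $f_{2t-1}^a=\pi_I$ in that region. The subtle part is near $M\times 1$: the hypothesis $f_1^b=\pi_I\circ g^b\in\cF(M)$ forces $g^b$ to preserve the $I$-coordinate in a neighborhood of $M\times 1$, hence $f_{2t-1}^a\circ g^b=\pi_I\circ g^b=\pi_I$ there as well. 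Finally, the endpoint $f_1^{ab}=f_1^a\circ g^b$ is nonsingular because $f_1^a$ is and $g^b$ is a diffeomorphism, so it lies in $\cE(M)$.

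For (ii), a direct computation gives $f_1^{ab}=f_1^a\circ g^b=(\pi_I\circ g^a)\circ g^b=\pi_I\circ(g^a\circ g^b)$. Under the endpoint bijection, this represents the element of $\pi_0\cC(M)$ whose associated height function is $\pi_I\circ(g^a\circ g^b)$, namely $g^a\circ g^b$ itself. The main obstacle is really just the normalization check in (i) near $M\times 1$, where $g^b$ need not be the identity; once one observes that $g^b$ nonetheless preserves the $I$-coordinate on a neighborhood of $M\times 1$ (by virtue of $\pi_I\circ g^b$ lying in $\cF(M)$), the rest of the verification is routine.
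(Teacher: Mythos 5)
Your proof is correct and follows essentially the same line as the paper's: verify that the concatenated family lies in $\pi_1(\cF(M),\cE(M))$ (matching at $t=\tfrac12$, boundary conditions) and compute the endpoint $f_1^{ab}=\pi_I\circ(g^a\circ g^b)$. The paper's version is terser---it omits the explicit long-exact-sequence framing and the check that $g^b$ preserves the $I$-coordinate near $M\times 1$, both of which you spell out as useful extra detail.
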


\begin{proof}
First note that the family of function $f_t^{ab}$ lies in $\pi_1(\cF(M),\cE(M))$ since, at $t=0$, $f_0^b=\pi_I$ and, at $t=\frac12$, $f_1^b=\pi_I\circ g^b=f_0^a\circ g^b$. It is also easy to see that $f^{ab}$ corresponds to $g^ag^b$ since $f_1^{ab}=f_1^a\circ g^b=\pi_I\circ g^a\circ g^b$.
\end{proof}

\begin{cor}\label{cor: inverse of ft}
The inverse of $(f_t)\in \pi_1(\cF(M),\cE(M))$ is the 1-parameter family
\[
	f_t'=f_{1-t}\circ g^{-1}:M\times I\to I
\]where $g\in \cC(M)$ is a diffeomorphism of $M\times I$ whose composition with projection $\pi_I:M\times I\to I$ is $f_1\in\cE(M)$ (so that $f_0'\circ g=f_1=\pi_I\circ g$, making $f_0'=\pi_I$.)
\end{cor}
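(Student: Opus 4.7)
The plan is to use Proposition \ref{prop: lateral composition of ft} and verify that the lateral composition of $(f_t)$ with the claimed inverse $(f_t')$ is null-homotopic in $\pi_1(\cF(M),\cE(M))$.

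First I would check the boundary conditions on $(f_t')$. Setting $t=0$ gives $f_0' = f_1 \circ g^{-1} = \pi_I \circ g \circ g^{-1} = \pi_I$, so the family starts at the basepoint of $\cF(M)$. Setting $t=1$ gives $f_1' = f_0 \circ g^{-1} = \pi_I \circ g^{-1}$, which has no critical points (being the composition of a diffeomorphism with the projection) and hence lies in $\cE(M)$. This also identifies $(f_t')$ under the Cerf bijection with the element $g^{-1} \in \cC(M)$, since $f_1' = \pi_I \circ g^{-1}$.

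Next I would apply Proposition \ref{prop: lateral composition of ft} with $(f_t^a, g^a) = ((f_t), g)$ and $(f_t^b, g^b) = ((f_t'), g^{-1})$. Since $g^a \circ g^b = \mathrm{id}$, the resulting composed family should represent the identity in $\pi_1(\cF(M),\cE(M))$. Writing it out, the composition is
\[
f_t^{ab} = \begin{cases} f_{1-2t} \circ g^{-1} & \text{if } t \le \tfrac12, \\ f_{2t-1} \circ g^{-1} & \text{if } t \ge \tfrac12, \end{cases}
\]
which is the path $s \mapsto f_s \circ g^{-1}$ traversed first from $s=1$ down to $s=0$ and then back from $s=0$ up to $s=1$. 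This is a standard ``there-and-back'' loop based at $\pi_I$ (with midpoint $\pi_I \circ g^{-1} \in \cE(M)$), and the usual cancellation homotopy provides a null-homotopy inside the relative pair $(\cF(M), \cE(M))$, since each intermediate loop starts and ends at $\pi_I \in \cE(M)$. Therefore $(f_t') \cdot (f_t)$ is trivial in $\pi_1(\cF(M),\cE(M))$, which identifies $(f_t')$ as the inverse of $(f_t)$.

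I do not expect any significant obstacle. The only minor bookkeeping is verifying that the cancellation homotopy respects the relative structure, but this is automatic since the endpoints of each intermediate loop coincide at $\pi_I$, which itself lies in $\cE(M)$.
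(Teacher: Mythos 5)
Your proposal is correct and takes essentially the same approach as the paper: invoke Proposition \ref{prop: lateral composition of ft} and observe that the lateral composition corresponds to $g\circ g^{-1}=\mathrm{id}$. The only (immaterial) differences are that you take $f^a=(f_t)$, $f^b=(f_t')$ whereas the paper swaps the roles (taking $f^a=(f_t')$, $f^b=(f_t)$), and that you write out the full composed family and recognize it as a there-and-back loop, whereas the paper just evaluates the endpoint $f_1^{ab}=\pi_I$ and appeals to the Cerf bijection $\pi_1(\cF(M),\cE(M))\cong\pi_0\cC(M)$ -- a slightly shorter route to the same conclusion.
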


\begin{proof} Let $f^a_t=f_t'$ and $f^b_t=f_t$. By Proposition \ref{prop: lateral composition of ft}, the product $f^{ab}=f^af^b$ at $t=1$ is
\[
	f_1^{ab}=f_1^a\circ g^b=f^b_0\circ (g^b)^{-1}\circ g^b=f_0^b=\pi_I.
\]
This corresponds to the identity in $\pi_0\cC(M)$. So, $f^a=(f_t')$ is the inverse of $f^b=(f_t)$.
\end{proof}

Here is an alternate version of composition in $\pi_0\cC(M)$ in terms of functions.

\begin{prop}\label{prop: stacking functions}
The group structure (composition) on $\pi_0\cC(M)$ corresponds to stacking of functions $f_{a},f_{b}:M\times I\to I$ in $\cE(M)$ given by ``putting $f_{b}$ on top of $f_{a}$'':
\[
	(f_{a}\bullet f_{b})(x,t)=\begin{cases} \frac12 f_{a}(x,2t) & \text{if } t\le \frac12\\
   \frac12 +\frac12 f_{b}(x,2t-1) & \text{if } t>\frac12
    \end{cases}
\]
\end{prop}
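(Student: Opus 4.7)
The plan is to reduce the statement to showing that $f_a \bullet f_b$ and $\pi_I \circ g^a \circ g^b$ lie in the same path component of $\cE(M)$. Indeed, by Proposition \ref{prop: lateral composition of ft} the latter function already represents the product $g^a \circ g^b \in \pi_0\cC(M)$ under the identification $\pi_0\cC(M) \cong \pi_0\cE(M)$, so any such path exhibits $f_a \bullet f_b$ as the same product.

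To produce the path, I would interpolate between the ``stacked'' pseudoisotopy and the honest composition $g^a \circ g^b$. For $t \in [0,1]$, let $g^a_t \in \cC(M)$ be obtained from $g^a$ by compressing its action into $M\times[0,\tfrac{1+t}{2}]$ with the identity on $M \times [\tfrac{1+t}{2}, 1]$; symmetrically, let $g^b_t$ be $g^b$ compressed into $M\times[\tfrac{1-t}{2}, 1]$. Setting $F_t = \pi_I \circ (g^a_t \circ g^b_t)$, one has $F_1 = \pi_I \circ g^a \circ g^b$, while at $t = 0$ the compressions $g^a_0, g^b_0$ have disjoint supports in $M\times[0,\tfrac12]$ and $M\times[\tfrac12,1]$, and a direct check using the rescalings $\phi(y,u) = (y,u/2)$ on the lower half and $\psi(y,u) = (y,(1+u)/2)$ on the upper half yields $F_0 = f_a \bullet f_b$. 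The continuous path $F_t$ in $\cE(M)$ then completes the argument.

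The main technical point will be verifying smoothness of the compressions $g^a_t, g^b_t$ across their seams, where rescaled copies of $g^a$ and $g^b$ must be glued to the identity. This is precisely the boundary condition built into $\cE(M)$: functions there equal $\pi_I$ on a full neighborhood of $\partial(M\times I)$, which via gradient flow forces $g^a$ and $g^b$ to be the identity in neighborhoods of $M\times 1$ and $M\times 0$ respectively, so the rescaled copies automatically agree with the identity to all orders across the seams, and no additional smoothing is required.
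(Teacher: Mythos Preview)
Your overall strategy---compress $g^a$ into the lower half and $g^b$ into the upper half, then observe that the composition has second coordinate $f_a\bullet f_b$---is exactly the paper's approach. But there is a genuine error in your smoothness discussion: you claim that the boundary condition on $f_a\in\cE(M)$ ``via gradient flow forces $g^a$ \dots\ to be the identity in neighborhoods of $M\times 1$.'' This is false. The condition $f_a=\pi_I$ near $M\times 1$ only says that the \emph{second} coordinate of $g^a(x,t)$ equals $t$ there; the first coordinate is $h_a(x,t)$, which near $t=1$ is the diffeomorphism $r(x)$ determined by $g^a(x,1)=(r(x),1)$. There is no reason for $r$ to be the identity---indeed, the whole point of pseudoisotopy theory is that $r$ may be a nontrivial diffeomorphism of $M$ merely pseudoisotopic to the identity. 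Consequently your compressed map $g^a_t$, defined as the rescaled $g^a$ on $M\times[0,\tfrac{1+t}{2}]$ glued to the \emph{identity} above, is discontinuous at the seam whenever $r\neq\mathrm{id}$.

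The fix is exactly what the paper does: on the upper portion of $g^a_0$ (the paper's $g_a'$) use $(r(x),t)$ rather than the identity. This patches smoothly with the compressed $g^a$ below, the resulting $g_a'$ is genuinely isotopic to $g_a$ in $\cC(M)$, and the second-coordinate computation you want still goes through because $\pi_I$ kills the $r$ in the first slot. With that correction your argument becomes the paper's proof.
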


\begin{proof}
Let $g_a,g_b\in \cC(M)$ be given by $g_a(x,t)=(h_a(x,t),f_a(x,t))$ and $g_b=(h_b,f_b)$. Then $g_{a},g_{b}$ are isotopic to $g_{a}',g_{b}'\in \cC(M)$ given by ``pushing $g_{a}$ down and $g_{b}$ up'':
\[
	g_{a}'(x,t)= \begin{cases} (h_{a}(x,2t),\frac12f_{a}(x,2t)) & \text{if } t\le \frac12\\
   (r(x),t) & \text{if } t> \frac12
    \end{cases}
\]
where $r\in Dif\!f(M)$ is the restriction of $g_{a}$ to $M\times 1$, i.e., $g_{a}(x,1)=(r(x),1)$ and
\[
	g_{b}'(x,t)= \begin{cases} (x,t) & \text{if } t\le \frac12\\
   (h_{b}(x,2t-1),\frac12+\frac12f_{b}(x,2t-1)) & \text{if } t> \frac12
    \end{cases}
\]
Then $g_{a}\circ g_{b}$ is isotopic to $g_{a}'\circ g_{b}'$ given by
\[
	(g_{a}'\circ g_{b}')(x,t)= \begin{cases} (h_{a}(x,2t),\frac12f_{a}(x,2t)) & \text{if } t\le \frac12\\
   (rh_{b}(x,2t-1),\frac12+\frac12f_{b}(x,2t-1)) & \text{if } t> \frac12
    \end{cases}
\]
Taking the second coordinate gives $f_{a}\bullet f_{b}$.
\end{proof}

The group structure on $\pi_1(\cF(M),\cE(M))$ is also given by stacking functions.

\subsection{The graphic}

The \emph{fiberwise singular set} $\Sigma(f)$ of $f$ is the set
\[
	\Sigma(f):=\{(x,s,t)\in M\times I^2\,:\, (x,s)\in M\times I \text{ is a singularlity of } f_t\}.
\]
 Since each function $f_t$ will generically have a finite singular set, and $f_0,f_1$ are nonsingular, $\Sigma(f)$ will, generically, be a closed 1-dimensional manifold. If $\Sigma(f)$ is empty or if $f$ is homotopic relative to $\partial (M\times I^2)$ to a family of function with empty singular set, then $f$ represents the trivial element of $\pi_1(\cF(M),\cE(M))$. Thus, $\pi_1(\cF(M),\cE(M))$ is the obstruction to the elimination of the singular set $\Sigma(f)$ by deformation of $f$.

We visualize $\Sigma(f)$ via its \emph{graphic} which is defined to be the set of all $(s,t)\in I^2$ so that $s$ is a critical value of $f_t$. Two examples are given in Figure \ref{Figure graphic}. The graphic on the right side of Figure \ref{Figure graphic} is called an ``eye'' or ``lens''. 

\begin{defn}\label{def: eye shaped graphic}
A \emph{lens-shaped graphic} or \emph{eye} is, by definition, the graphic of a family of functions $f_t:M\times I\to I$ so that $f_t$ is Morse with two critical points $p_t,q_t$ with $f_t(p_t)<f_t(q_t)$ for $t_0<t<t_1$, $f_{t_0},f_{t_1}$ have only one critical point called a \emph{birth-death point} and $f_t$ is nonsingular for $t\notin[t_0,t_1]$. A family of functions $(f_t)$ having a lens-shaped graphic will be called a \emph{lens-shaped model} for $M$.
\end{defn}
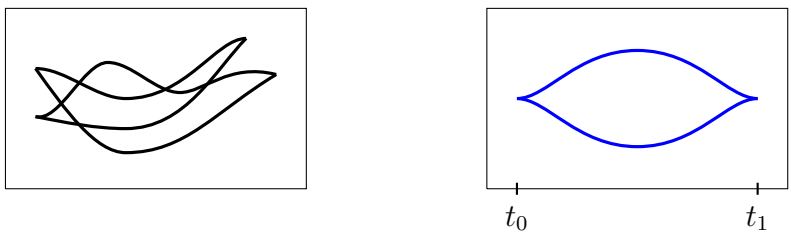
\begin{figure}[htbp]
\begin{center}
\begin{tikzpicture}[scale=.8]
\begin{scope}[xshift=-4cm]
\draw (0,0.5)--(5,0.5)--(5,3.5)--(0,3.5)--(0,0.5);
\begin{scope}[yshift=-4mm]
\draw[very thick] (.5,2.9)..controls (1,2.2) and (1.5,1.5)..(2,1.5)..controls (3,1.5) and (3.5,2.2)..(4.5,2.8);
\draw[very thick] (.5,2.1)..controls (.9,2) and (1.3,3)..(1.7,3)..controls (2.1,3) and (2.5,2.5)..(2.9,2.5)..controls (3.3,2.5) and (3.7,3)..(4.5,2.8);
\end{scope}
\draw[very thick] (.5,2.5)..controls (1,2.5) and (1.5,2)..(2,2)..controls (3,2) and (3.5,3)..(4,3);
\draw[very thick] (.5,1.7)..controls (1,1.6) and (1.5,1.5)..(2,1.5)..controls (3,1.5) and (3.5,2.5)..(4,3);
\end{scope}
\begin{scope}[xshift=4cm]
\draw (0,0.5)--(5,0.5)--(5,3.5)--(0,3.5)--(0,0.5);
\draw[very thick,color=blue] (.5,2)..controls (1,2) and (1.5,2.8)..(2.5,2.8)..controls (3.5,2.8) and (4,2)..(4.5,2);
\draw[very thick,color=blue] (.5,2)..controls (1,2) and (1.5,1.2)..(2.5,1.2)..controls (3.5,1.2) and (4,2)..(4.5,2);
\draw[thick] (.5,.6)--(.5,.4)node[below]{$t_0$} ;
\draw[thick] (4.5,.6)--(4.5,.4)node[below]{$t_1$} ;
\end{scope}
\end{tikzpicture}
\caption{Two possible graphics of 1-parameter families of functions on $M\times I$. The one on the right is called an ``eye'' or ``lens''.}
\label{Figure graphic}
\end{center}
\end{figure}

\subsection{Second obstruction}

There are two steps in the process of simplification of the singular set $\Sigma(f)$ of $f$. Hatcher and Wagoner \cite{HW} showed that, for $\dim M\ge 6$, there is a surjective homomorphism
\[
	p: \pi_0\cC(M)\to Wh_2(\pi_1M)
\]
where, for any group $\pi$, $Wh_2(\pi)$ is a quotient of $K_2(\ZZ[\pi])$.
They called this invariant the ``first obstruction to pseudo-isotopy''. 

Hatcher and Wagoner also showed that the kernel of the map $p$ is generated by ``lens-shaped models'' (Definition \ref{def: eye shaped graphic}) in the middle indices $k,k+1$ where $k=\lfloor \frac n2\rfloor$ is the greatest integer $\le \frac n2$ where $n=\dim M$. The ``second obstruction to pseudo-isotopy'' is an invariant defined on this set of 1-parameter families of function. 

To define these invariants, we will add extra structure, called ``markings'', to these families of functions, define invariants on the marked lens-shaped models, then determine how these invariants change when we change the markings.

\begin{defn}\label{def: lens-shaped models}
Let $\cD(M)$ denote the space of lens-shaped models $(f_t)$ for $M$ with critical points $p_t,q_t$ in indices $k,k+1$ where $k=\lfloor \frac n2\rfloor$. Let $\cD_0(M)$ denote the space of \emph{``marked'' lens-shaped models} for $M$. These are defined to be lens-shaped models $(f_t)$ together with:
\begin{enumerate}
\item A homotopy class of paths $\gamma$ from a fixed base point $\ast=(x_0,0,0)\in M\times I^2$ to the birth point $b_0$ of $f_t$ (at $t=t_0$ in Fig. \ref{Figure graphic}).
\item An orientation of the negative eigenspace of $D^2f_t$ at the birth point $b_0$.
\end{enumerate} 
The group $\pi_1(M)\times\ZZ_2$ acts on the set of marking, $\pi_1(M)$ acting on the paths by composition and $\ZZ_2$ acting by reversing the orientation. We denote by $\widetilde\theta: \pi_0\cD(M)\to \pi_0\cC(M)$ the set map which sends a lens-shaped model $(f_t)$ to the corresponding element of $\pi_0\cC(M)\cong \pi_1(\cF(M),\cE(M)) $. Composing with the surjective mapping $\pi_0\cD_0(M)\onto \pi_0\cD(M)$ we get a mapping
\[
	\widetilde\theta_0: \pi_0\cD_0(M)\to \pi_0\cC(M)
\]
which we will show is a homomorphism of groups (Lemma \ref{lem: D0(M) is a group}).
\end{defn}

The group structure on $\pi_0\cD_0(M)$ is given as follows where, for convenience of notation, we double the size of the parameter interval to $[0,2]$. Let $f^a,f^b$ be two marked lens-shaped models for $M$. These are 1-parameter families of functions $f_t^a,f_t^b:M\times I\to I$. The new function $f^a\ast f^b$ will be given by rescaling a family of function $f_t^{ab}:M\times I\to I$, $t\in[0,2]$ by $(f^a\ast f^b)_t=f^{ab}_{2t}$. The functions $f_t^{ab}$ are constructed in two stages.
\begin{enumerate}
\item Starting with $f^{c}=f^a\cdot f^b$ given by putting $f^b$ to the left of $f^a$ as in Proposition \ref{prop: lateral composition of ft} but with the parameter domain stretched out. Thus $f^{c}_t=f^b_t$ for $t\le 1$ and $f^{c}_t=f_{t-1}^a\circ g^b$ for $t>1$ where $g^b\in \cC(M)$ so that $f^b_1=\pi_I\circ g^b$. See Fig. \ref{Fig: composition in D0(M)}.
\item ``Merging'' the birth point $b_2$ of $f^a\circ g^b$ (related to the birth point $b_0'$ of $f^a$ by $b_2=(g^b)^{-1}(b_0')$) with the death point $b_1$ of $f^b$ along the dashed path $w_t$ in Fig. \ref{Fig: composition in D0(M)} using the markings. The result is $f_t^{ab}$ for $t\in [0,2]$.
\end{enumerate}


%
\begin{figure}[htbp]
\begin{center}
\begin{tikzpicture}[scale=.75]
\begin{scope}[xshift=-5cm]
\draw (6.5,0.5)--(6.5,3.5)--(-1,3.5)--(-1,0.5);
\draw (6.5,3.5)--(14,3.5)--(14,0.5)--(6.5,0.5);
\draw[very thick,color=blue] (.5,2)..controls (1,2) and (1.5,2.8)..(2.5,2.8)..controls (3.5,2.8) and (4,2)..(4.5,2);
\draw[very thick,color=blue] (.5,2)..controls (1,2) and (1.5,1.2)..(2.5,1.2)..controls (3.5,1.2) and (4,2)..(4.5,2)
node[below]{$b_1$};
\draw[thick] (-1,.6)--(-1,.4)node[below]{$0$} ;
\draw (2.5,2) node{$f^{b}$};
\draw[->,thick] (-1,0.5)--(2,0.5);
\draw (2,.5) node[below]{$t$};
\draw[->,thick] (-1,0.5)--(-1,2.5);
\draw (0,2.5) node[left]{$s$};
\draw[thick] (4.5,.6)--(4.5,.4)node[below]{$t_1$} ;
\draw[blue] (.5,2) node[below]{$b_0$};
\draw[blue] (3.5,1.5) node[below]{$p_t$};
\end{scope}
\draw (-5,0.5)--(8.5,0.5);
\draw[thick,blue,dashed] (3.5,2)--(-.5,2);
\draw[blue] (1,2) node[above]{$w_t$};
\begin{scope}[xshift=3cm]
\draw (2.5,2) node{$f^{a}\circ g^b$};
\draw[very thick,color=blue] (.5,2)..controls (1,2) and (1.5,2.8)..(2.5,2.8)..controls (3.5,2.8) and (4,2)..(4.5,2);
\draw[very thick,color=blue] (.5,2)..controls (1,2) and (1.5,1.2)..(2.5,1.2)..controls (3.5,1.2) and (4,2)..(4.5,2)
(.5,2) node[below]{$b_2$};
\draw[thick] (-1.5,.6)--(-1.5,.4)node[below]{$1$} ;
\draw[thick] (.5,.6)--(.5,.4)node[below]{$t_2$} ;
\draw[thick] (6,.6)--(6,.4)node[below]{$2$} ;
\end{scope}
\end{tikzpicture}
\caption{The family of functions $f^c_t$ on $M\times I$ for $t\in [0,2]$ is given to be $f^b_t$ on the left side where $t\in[0,1]$ and $f_{t-1}^a\circ g^b$ on the right side where $t\in [1,2]$. The death point $b_1$ at $t_1$ is cancelled with the birth point $b_2$ at $t_2$ along the path $w_t$ to produce the marked lens shaped model $f_t^{ab}$.}
\label{Fig: composition in D0(M)}
\end{center}
\end{figure}
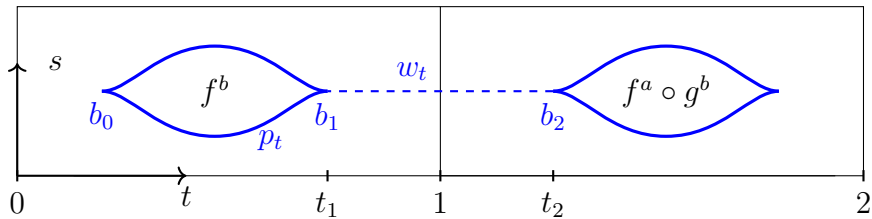

The family of functions $f_t^c$, $t\in[0,2]$, is indicated in Figure \ref{Fig: composition in D0(M)}. The ``merging'' process is given as follows. We have functions $f^c_t$ for $t\in [t_1,t_2]$ which are nonsingular for $t_1<t<t_2$ and so that each of $f^c_{t_1},f^c_{t_2}$ has exactly one critical point $b_1,b_2$, resp., which is a birth-death point of index $k=\lfloor \frac n2\rfloor$. Furthermore, the markings on $f^b,f^a$ (when we compose with $(g^b)^{-1}$ in the case of $f^a$) give paths from $b_1,b_2$ to a common basepoint. In the case of $b_1$ this path goes first along the path $p_t$ of index $k$ critical points of $f_t^b$ to the birth point $b_0$ of $f^b$ and then along the path to the basepoint given by the marking. We combine these to give a path $w_t \in M\times I$ for $t\in [t_1,t_2]$ so that $w_t=b_i$ for $t=t_i$ and $f^c_t$ is nonsingular at $w_t$ for all $t\in (t_1,t_2)$.

At each point $w_t$ for $t\in [t_1,t_2]$ we let $v_t$ be the tangent vector which points in the direction of the gradient of $f^c_t$ for $t\in (t_1,t_2)$ and which points in the positive cubic direction of $f_t^c$ in the kernel of $D^2f_t^c$ for $t=t_1,t_2$. Using the second part of the markings for $f^a,f^b$ (composed with $D(g^b)^{-1}$ in the case of $f^a$), we have an oriented tangent $k$ plane at $b_1,b_2$ which is orthogonal to $v_t$. For $n\ge3$ the space of oriented $k$-planes in $\RR^n$ is simply connected. So, this can be extended uniquely up to homtopy to a family of oriented tangent $k$ planes along $w_t$ orthogonal to $v_t$ at each $w_t$. 

We can choose local coordinates $(x,y,z)\in \RR^k\times \RR^{n-k}\times \RR$ for $M\times I$ centered at $w_t$ so that
\[
	f^c_t(x,y,z)=h_t(z)+||y||^2-||x||^2
\]
for all $t\in [t_1,t_2]$ where $h_t(z)$ is a smooth family of functions in one variable $z$ so that $h_t(0)=f^c_t(w_t)$ for each $t\in [t_1,t_2]$. Working only with $h_t$ we can modify the family of function $f_t^c$ in a small neighborhood of the path $w_t$ so that the new function has a birth-death point at $w_t$ for all $t\in[t_1,t_2]$. We can then resolve these degenerate critical points into two Morse critical points in ``cancelling position''. This gives the family of functions $f_t^{ab}$ which is well-defined up to homotopy. The product is $(f^a\ast f^b)_t=f_{2t}^{ab}$. The marking for $f^a\ast f^b$ is given by the marking for $f_t^b$.

\begin{lem}\label{lem: D0(M) is a group}
For $n\ge 3$, $\pi_0\cD_0(M)$ forms a groups as outlined above and the mapping $\widetilde\theta_0: \pi_0\cD_0(M)\to \pi_0\cC(M)$, given in Definition \ref{def: lens-shaped models} is a group homomorphism.
\end{lem}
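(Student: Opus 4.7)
My plan is to first verify that the merging operation descends to a well-defined binary operation on $\pi_0\cD_0(M)$, then check the group axioms, and finally deduce the homomorphism statement from Proposition \ref{prop: lateral composition of ft}. The crux of well-definedness is showing that the merging depends only on the homotopy class of the marked input data. The path $w_t$ from $b_1$ to $b_2$ is determined up to homotopy rel endpoints by concatenating the paths specified by the markings with the critical-point paths inside each eye. The transverse oriented $k$-plane field along $w_t$ is determined up to homotopy because the space of oriented $k$-planes orthogonal to a nonzero vector in $\RR^{n+1}$ is the oriented Grassmannian $\widetilde{\mathrm{Gr}}_k(\RR^n)$, which is simply connected for $n \ge 3$; this is precisely where the dimensional hypothesis is used. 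Once the path and plane field are fixed up to homotopy, the local normal form $f_t^c(x,y,z) = h_t(z) + \|y\|^2 - \|x\|^2$ reduces the merging to a canonical deformation of the one-variable family $h_t$ rel endpoints, and continuity in $(f^a, f^b)$ ensures that $\ast$ descends to $\pi_0\cD_0(M)$.

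For associativity I would start from the triple lateral composition $f^a \cdot f^b \cdot f^c$ (which has three disjoint eyes) and observe that both $(f^a \ast f^b) \ast f^c$ and $f^a \ast (f^b \ast f^c)$ are obtained by performing two merges supported in disjoint sub-rectangles of $I^2$, so the two orderings are homotopic. The identity would be represented by a small lens in a coordinate ball of $M \times I$ with the trivial marking, which merges trivially against any input and leaves the other factor unchanged up to homotopy in $\cD_0(M)$. For inverses I would use the time-reversed family $f'_t = f_{1-t} \circ g^{-1}$ from Corollary \ref{cor: inverse of ft}, equipped with the induced marking; the product $f \ast f^{-1}$ then produces a lens configuration that admits a contraction to the trivial lens, realizing the identity in $\pi_0\cD_0(M)$.

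Finally, the homomorphism property follows because the merging step is a homotopy of families $(f^c_t)$ supported in a small neighborhood of the merging region that leaves the endpoint functions $f^c_0 = \pi_I$ and $f^c_2 = \pi_I \circ g^a \circ g^b$ in $\cE(M)$ unchanged. Hence $f^a \ast f^b$ and the lateral composition $f^a \cdot f^b$ represent the same element of $\pi_1(\cF(M), \cE(M))$, which Proposition \ref{prop: lateral composition of ft} identifies with $\widetilde\theta(f^a)\, \widetilde\theta(f^b)$ in $\pi_0\cC(M)$. I expect the main obstacle to be the well-definedness verification, and in particular the check that any two admissible choices of $w_t$ and of the oriented transverse $k$-plane field lead to marked lens-shaped models in the same component of $\cD_0(M)$; this is where the $n \ge 3$ hypothesis and the simple-connectivity of $\widetilde{\mathrm{Gr}}_k(\RR^n)$ are essential.
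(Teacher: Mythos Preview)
Your approach is essentially the same as the paper's: well-definedness via the simple connectivity of the oriented Grassmannian (this is exactly where the paper invokes $n\ge 3$), associativity declared straightforward, the inverse taken to be the time-reversed family $f_{1-t}\circ g^{-1}$ with an induced marking, and the homomorphism property deduced from the fact that the merging step is a deformation of the lateral product of Proposition~\ref{prop: lateral composition of ft}.

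There is one small but genuine difference worth noting. You take the identity to be a ``small lens in a coordinate ball'' and verify the inverse axiom by asserting that $f\ast f^{-1}$ contracts to such a trivial lens. The paper instead declares the identity to be \emph{any symmetric} lens-shaped model (one with $f_t=f_{1-t}$), and then verifies the inverse axiom by checking directly that the product $f^{ab}$ is symmetric: since $f^c_t=f^b_t$ and $f^c_{2-t}=f^a_{1-t}\circ g^b=f^b_t$, and the markings match by construction of the marking on $f^a$. This symmetry observation is a bit slicker than arguing a contraction, and it also pins down precisely what marking to put on the inverse (the paper specifies $g^b(\gamma)\beta$, with $\beta$ running back along the critical-point path $p_t$), whereas your ``induced marking'' is left implicit. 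Your version is not wrong, but you would still owe the reader an argument that a symmetric lens (or your small standard lens) actually acts as a two-sided unit for $\ast$; neither you nor the paper spells this out, though it is routine.
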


\begin{proof}
The multiplication rule is $(f^a)(f^b)=f^a\ast f^b$ as described above. It is straightforward to show that this is associative. It is well-defined since the operation is given by the path in the space of oriented $k$-planes in the kernel of $df_t$ which is given by the marking. Since this multiplication rule is a deformation of the product given in Proposition \ref{prop: lateral composition of ft}, the mapping $\widetilde\theta_0: \pi_0\cD_0(M)\to \pi_0\cC(M)$ is a homomorphism of semigroups.

The identity in $\pi_0\cD_0(M)$ is given by any lens-shaped model $(f_t)$ which is symmetrical in the sense that $f_t=f_{1-t}$ for all $t$. The left inverse of $(f_t^b)$ is $f_t^a$ given by
\[
	f_t^a=f_{1-t}^b\circ g^{-1}:M\times I\to I
\]
where $g\in \cC(M)$ is given up to isotopy by $f_1^b=\pi_I\circ g:M\times I\to I$. The marking on $f^a$ is given by $g^b(\gamma)\beta$ where $\gamma$ is the path from the basepoint to the birth point of $f^b$ which maps by $g^b$ to the death point of $f^a$ and $\beta$ is the path along the Morse point set $p_t$ back to the birth point of $f^a$. This represents the inverse of $f^b$ in $\pi_1(\cF(M),\cE(M))$ by Proposition \ref{cor: inverse of ft}. However, $f_t^{ab}$ is symmetrical since $f^c_t=f_b$ and $f^c_{2-t}=f^a_{1-t}\circ g^b=f^b_t$ and the markings match. So, $\pi_0\cD_0(M)$ is a group and $\widetilde\theta_0$ is a homomorphism.
\end{proof}


\subsection{Computation of second obstruction}

To compute $\pi_0\cD_0(M)$, we need the following. For any group $\pi$ and left $\pi$-module $A$, we consider $A[\pi]:=A\otimes \ZZ[\pi]$, the direct sum of copies of $A$, one for each element of $\pi$. This is a $\pi$-module with the diagonal action of $\pi$ where $\pi$ acts on $\ZZ[\pi]$ by conjugation:
\begin{equation}\label{eq: diagonal action of pi}
	\sigma\left( \sum a_i\otimes g_i\right)=\sum \sigma a_i\otimes \sigma g_i\sigma^{-1}.
\end{equation}
Each conjugacy class in $\pi$ gives a submodule of $A[\pi]$, namely the sum of all $A\otimes \tau$ indexed by the conjugates $\tau$ of some fixed $\sigma\in\pi$. Let $A[e]=A\otimes e$ be the submodule indexed by the identity $e\in\pi$. We consider the quotient module $A[\pi]/A[e]$.

\begin{defn}\label{def: Wh1+ Z}
We define $Wh_1^+(\pi;A)$ to be the additive group of coinvariants of the action of $\pi$ on $A[\pi]/A[e]$:
\[
	Wh_1^+(\pi;A):=(A[\pi]/A[e])_\pi.
\]
When the action of $\pi$ on $A$ is trivial, this is the direct sum of copies of $A$, one for every nonidentity conjugacy class in $\pi$. More generally
\[
	Wh_1^+(\pi;A)\cong \bigoplus A_{C(\sigma_\alpha)}
\]
This is the direct sum over all nonidentity conjugacy classes in $\pi$ of the coinvariants of the action of $C(\sigma_\alpha)$ on $A$ where $C(\sigma_\alpha)$ is the centralizer of a representative $\sigma_\alpha$ of the conjugacy class.
\end{defn}

The main example is $\pi=\pi_1M$ and $A=\ZZ_2\oplus \pi_2M$. For example, if $\pi$ is finite with trivial action on $\pi_2M$ then 
\[
	Wh_1^+(\pi_1M;\ZZ_2\oplus \pi_2M)=\bigoplus_{c-1}\ (\ZZ_2\oplus \pi_2M)
\]
where $c$ is the number of conjugacy classes of elements of $\pi_1M$.

Then the theorem of Hatcher and Wagoner can be restated as follows.

\begin{thm}\label{thm of H-W in terms of D(M)}\cite{HW}
For $\dim M=n\ge6$ we have an isomorphism of groups:
\begin{equation}\label{eq: compute pi0 D0(M)}
	\lambda_0:\pi_0 \cD_0(M)\cong (\ZZ_2\oplus \pi_2M)\otimes \ZZ[\pi_1M]/\ZZ[e]=(\ZZ_2\oplus \pi_2M)[\pi_1M]/(\ZZ_2\oplus \pi_2M)[e].
\end{equation}
Furthermore, this isomorphism is equivariant with respect to the action of $\pi_1M\times\ZZ_2$ given by changing the markings on $\cD_0(M)$ and with the diagonal action of $\pi_1M$ given in \eqref{eq: diagonal action of pi} and the trivial action of $\ZZ_2$. Thus $\pi_0\cD_0(M)_{\pi_1M}=Wh_1^+(\pi_1M;\ZZ_2\oplus \pi_2M)$. Furthermore, we have an exact sequence 
\[
	Wh_1^+(\pi_1M;\ZZ_2\oplus \pi_2M)\xrightarrow{\overline\theta_0} \pi_0\cC(M)\to Wh_2(\pi_1M)\to 0
\]
where $\overline\theta_0$ is the homomorphism induced by $\widetilde\theta_0:\pi_0\cD_0(M)\to \pi_0\cC(M)$.
\end{thm}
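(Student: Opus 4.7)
The plan is to define $\lambda_0$ by extracting the usual Hatcher--Wagoner intersection data from a marked lens-shaped model and then to repackage the results of \cite{HW} so that they describe $\pi_0\cD_0(M)$ directly, rather than only its image in $\pi_0\cC(M)/Wh_2(\pi_1M)$.

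\emph{Defining $\lambda_0$.} For a marked lens $(f_t)$ with birth point $b_0$, death point $b_1$, and Morse trajectories $p_t,q_t$ of indices $k,k+1$, I would look, for each $t\in (t_0,t_1)$, at the transverse intersection inside an intermediate level set of $f_t$ of the ascending $k$-sphere of $p_t$ with the descending $k$-sphere of $q_t$. As $t$ varies, these intersections trace out a compact $1$-manifold $\Gamma$ whose boundary lies at $\{b_0,b_1\}$; one component of $\Gamma$ is a canonical arc running through the lens, while every other component is either an arc with ends in $\{b_0,b_1\}$ or a closed loop. To each non-canonical component $\gamma$ one associates a sign $\epsilon_\gamma \in \ZZ_2$ from the transverse intersection and an element $\sigma_\gamma\in \pi_1M$, obtained by following the trajectories above and below $\gamma$ through $p_t,q_t$ back along the canonical arc and then to the basepoint using the marking. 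When $\gamma$ is a closed loop, the sheets of ascending/descending trajectories over $\gamma$ close up into an immersed $2$-sphere representing a class $\alpha_\gamma\in\pi_2M$. I then set
\[
\lambda_0(f_t):=\sum_\gamma (\epsilon_\gamma,\alpha_\gamma)\otimes \sigma_\gamma \in (\ZZ_2\oplus\pi_2M)[\pi_1M]/(\ZZ_2\oplus\pi_2M)[e],
\]
the quotient by $(\ZZ_2\oplus\pi_2M)[e]$ absorbing the ambiguity in choosing the canonical arc.

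\emph{Homomorphism property and equivariance.} I would check invariance of $\lambda_0$ under generic $1$-parameter families in $\cD_0(M)$ by running through the finite list of codimension-one moves (isotopies of intersection points, handle slides between the $p$- and $q$-trajectories, and motion of the birth/death points): each move either leaves $\Gamma$ unchanged up to canceling pairs, or alters only the identity-indexed component, which is killed in the quotient. The homomorphism property follows from the definition of the product $\ast$: the merging of the death point of $f^b$ with the birth point of $f^a\circ g^b$ along $w_t$ glues the two intersection $1$-manifolds into their disjoint union, and the markings are inherited so that the two contributions simply add. Equivariance with respect to $\pi_1M$ is the observation that pre-composing the basepoint path by $\sigma$ conjugates every $\sigma_\gamma$ and simultaneously translates every $\alpha_\gamma$ via the $\pi_1M$-action on $\pi_2M$---this is exactly the diagonal action of \eqref{eq: diagonal action of pi}; the $\ZZ_2$ action reverses both the intersection sign $\epsilon_\gamma$ (trivial in $\ZZ_2$) and the orientation used to read off $\alpha_\gamma$, which is compensated by the reversal of the ordering used to bound the $2$-sphere, so the $\ZZ_2$ action on the target is trivial. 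Surjectivity is explicit: one realizes $\sigma$ by threading an extra intersection arc along a loop representing $\sigma$, and realizes $\alpha$ by tubing in a $2$-sphere.

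\emph{Injectivity and the exact sequence.} The injectivity of $\lambda_0$ is the main obstacle and is precisely the geometric content of \cite{HW}: vanishing of the algebraic intersection data modulo $(\ZZ_2\oplus\pi_2M)[e]$ lets one perform handle slides and apply the Whitney trick to geometrically cancel all non-canonical components of $\Gamma$, producing a path in $\cD_0(M)$ to the trivial model. This is where the hypothesis $\dim M \ge 6$ is essential, and I would import it from \cite{HW} rather than rederive it. Granting it, the exact sequence follows by composing $\overline\theta_0$ with the Hatcher--Wagoner surjection $p:\pi_0\cC(M)\onto Wh_2(\pi_1M)$ and using their theorem that $\ker p$ is generated by lens-shaped models, which gives surjectivity of $\overline\theta_0$ onto $\ker p$ and hence exactness at $\pi_0\cC(M)$.
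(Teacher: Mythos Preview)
Your outline follows the Hatcher--Wagoner framework and correctly identifies the intersection $1$-manifold as the carrier of the invariant, but your definition of the $\ZZ_2$-component is off. You write that each component $\gamma$ contributes ``a sign $\epsilon_\gamma\in\ZZ_2$ from the transverse intersection'', but a closed $1$-dimensional component does not carry a single intersection sign. In the paper (Definition~\ref{def: framed bordism invariant for D0(M)}) the invariant is the \emph{framed bordism class} $[J_0]\in\Omega_1^{fr}(\Omega M)\cong(\ZZ_2\oplus\pi_2M)[\pi_1M]$: the marking gives a normal framing of $S^k_q$ in the level set, hence a normal framing of the intersection $J$ inside $S^k_q\times(t_0,t_1)$, and the $\ZZ_2$ summand is the framing invariant (the image of $\pi_1SO$), not an intersection sign. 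This is not a cosmetic difference---it is exactly the piece that becomes the integer framing number in the low-dimensional refinement (Proposition~\ref{prop: integer framed bordism invariant}) and that interacts with $w_2$ under the involution (Proposition~\ref{prop: involution on Wh1+ invariant}). A related imprecision: near each birth-death point the ascending and descending spheres meet in exactly one point, so $J$ has exactly one arc component, not ``either an arc with ends in $\{b_0,b_1\}$ or a closed loop'' as you allow.

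On the proof of the isomorphism itself, the paper does not simply import injectivity from \cite{HW} but gives a short self-contained argument (Lemma~\ref{lem: framed bordism invariant of D0(M)}): for $n\ge6$ one has $k\ge3$, so $S^k_q\times(t_0,t_1)$ has dimension $\ge4$ and there is no obstruction to framed cobordism of $1$-manifolds there; a nullbordism lets one absorb each closed component of $J$ into the open arc by connected sum, and then $\dim V\ge6$ allows the open arc to be straightened. Surjectivity is likewise realized geometrically by creating a cancelling pair, introducing two extra incidences with $\pm\sigma$, and re-cancelling them after a $(\ZZ_2\oplus\pi_2M)$-twist. Your strategy of checking codimension-one moves would also work, but once the invariant is phrased as a framed bordism class the argument becomes considerably shorter.
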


The idea is that, by Morse theory, a 1-parameter family of functions gives a family of chain complexes. In this case the chain complexes are acyclic. Hatcher and Wagoner show that this family can be deformed until all critical points are in two indices (the ``Two-index Theorem''). This gives a family of invertible matrices. There is a K-theoretic obstruction to simplifying this family into the constant family of an $m\times m$ identity matrix. The resulting graphic is a disjoint union of $m$ ``eyes'' which can be deformed into one ``eye'' by choosing markings and using the product structure of $\pi_0\cD_0(M)$. 

We now recall the definition of the invariant $\lambda_0$ in \eqref{eq: compute pi0 D0(M)}. Consider a marked lens-shaped model $(f_t)$ for $M^n$ with birth-death points at parameter values $t_0,t_1$. For each $t_0<t<t_1$, $f_t$ has critical points $p_t,q_t$ in indices $k,k+1$ where $k=\lfloor \frac n2\rfloor$ with critical values $f_t(p_t)< \frac12<f_t(q_t)$. The level surface $V_t^n=f_t^{-1}(\frac12)$ has an embedded $k$-sphere $S_q^k$ which is the stable manifold of $q_t$ and an embedded $n-k$ sphere $S^{n-k}_p$ which is the unstable manifold of $p_t$. For $t$ slightly greater than $t_0$ and slightly less than $t_1$, these spheres meet transversely at one point. However, for values of $t$ between $t_0$ and $t_1$, these spheres may meet in more than one point.

\begin{defn}\label{def: framed bordism invariant for D0(M)}\cite{HW}
The \emph{framed bordism invariant} of the marked lens-shaped model $f_t$ is defined as follows. The orientation of the negative eigenspace of $D^2f_t$ at the birth point $b_0$ of $f_{t_0}$ gives an orientation for $S^k_q$ and a normal framing of $S_p^{n-k}$. Thus, their intersection is a framed 1-dimensional submanifold $J^1\subset S^k_q\times (t_0,t_1)$ which has one open component going from $t_0$ to $t_1$. Let $J_0$ be the union of the closed components of $J$. Each point in $J$ represents a trajectory of the negative gradient $-\nabla f_t$ from $q_t$ down to $p_t$. Together with the paths $p_t,q_t$ to the birth point $b_0$ then the marking which is a path from the basepoint of $M\times I^2$ to $b_0$, we obtain a mapping
\[
	J^1\to \Omega M.
\]
Restricting this to $J_0$ gives an element in the framed bordism group
\[
	[J_0]\in \Omega_1^{fr}(\Omega M)=(\ZZ_2\oplus\pi_2M)[\pi_1M].
\]
The open component of $J$ maps to the identity component of $\Omega M$. This makes any framed bordism class in $(\ZZ_2\oplus\pi_2M)[e]$ not well-defined since it could be absorbed into the open component of $J$. Modulo this we obtain the \emph{framed bordism invariant}
\[
	\lambda_0(f_t)\in (\ZZ_2\oplus\pi_2M)[\pi_1M]/(\ZZ_2\oplus\pi_2M)[e].
\]
\end{defn}

If we take the product of two families $f_t,f_t'$ in $\cD_0(M)$, we obtain the disjoint union of the closed components $J_0,J_0'$ of the framed manifolds $J,J'$ for $f_t,f_t'$. Therefore, $\lambda_0$ gives a homomorphism of groups:
\[
	\lambda_0:\pi_0\cD_0(M)\to (\ZZ_2\oplus\pi_2M)[\pi_1M]/(\ZZ_2\oplus\pi_2M)[e].
\]

\begin{lem}\label{lem: framed bordism invariant of D0(M)}\cite{HW}
For $\dim M\ge 6$, $\lambda_0$ is an isomorphism.
\end{lem}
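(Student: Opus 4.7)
The plan is to prove that $\lambda_0$ is a group homomorphism (already essentially explained: the product in $\pi_0\cD_0(M)$ corresponds to disjoint union of closed components of the framed intersection manifolds), and then establish surjectivity and injectivity separately, leveraging the Morse-theoretic meaning of $\lambda_0$ together with the Whitney trick performed in the middle-dimensional level surface $V_t^n$.

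For surjectivity, I would start from a \emph{trivial} marked lens-shaped model, for instance a birth-death pair supported in a standard disk in $M\times I$, whose framed intersection manifold $J$ consists only of the single open arc and hence has $\lambda_0=0$. Given a target class $\sum_i (a_i,\alpha_i)\otimes\sigma_i$ with $\sigma_i\ne e$, I would realize each summand by a \emph{finger move} on the sphere $S_q^k\subset V_t^n$: homotope $S_q^k$ along a loop representing $\sigma_i$, based at the existing transverse intersection point with $S^{n-k}_p$, twisting the normal framing so as to prescribe the $\ZZ_2\oplus\pi_2M$ contribution $(a_i,\alpha_i)$. Each finger move, performed in a time-interval internal to $(t_0,t_1)$, creates a single new closed circle component of $J$ whose class in $\Omega_1^{fr}(\Omega M)=(\ZZ_2\oplus\pi_2M)[\pi_1M]$ is exactly $(a_i,\alpha_i)\otimes\sigma_i$. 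Performing these constructions disjointly and in disjoint sub-intervals produces a marked lens-shaped model with the prescribed value of $\lambda_0$.

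For injectivity, suppose $(f_t)\in\cD_0(M)$ has $\lambda_0(f_t)=0$. Then after possibly rerouting classes into the open arc, the closed components $J_0$ bound a framed surface in $V_t^n\times(t_0,t_1)$ mapping compatibly to $\Omega M$. Geometrically such a null-bordism lets us cancel intersection points of $S_q^k$ and $S_p^{n-k}$ in pairs via Whitney disks inside $V_t^n$. The hypothesis $n\ge 6$ gives $k\ge 3$ and $n-k\ge 3$, so these Whitney disks can be embedded and made disjoint, and the dual-sphere dimension conditions ensure we can push them off $S_q^k$ and $S_p^{n-k}$. After performing these Whitney moves in a 1-parameter family, $S_q^k$ and $S_p^{n-k}$ meet transversely in a single point for every $t\in(t_0,t_1)$, so the critical points $p_t,q_t$ are in cancelling position throughout the entire interval. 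A uniform family cancellation then deforms $(f_t)$ through marked lens-shaped models to the identity element, completing injectivity.

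The main obstacle is the parametrized Whitney trick: the framed null-bordism of $J_0$ must be converted to a smooth family of embedded Whitney disks carrying out the cancellations \emph{without} introducing new birth-death singularities in the graphic or destroying the lens shape. This requires the standard Cerf-theoretic machinery for 1-parameter families together with careful bookkeeping of orientations and normal framings so that the markings are preserved; the dimension hypothesis $n\ge 6$ is exactly what makes both the finger moves (in the surjectivity argument) and the Whitney moves (in the injectivity argument) go through, and also justifies why the $\pi_2M$ and $\ZZ_2$ contributions detect the only remaining obstructions after the K-theoretic part has been killed by restricting to lens-shaped models.
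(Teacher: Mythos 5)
Your proof is correct and takes essentially the same approach as the paper: surjectivity via finger moves (what the paper calls ``creating two extra geometric incidences which add and subtract $\sigma$, then cancelling them using $(a,b)$''), and injectivity via the parametrized Whitney trick, which the paper expresses more compactly as deforming the framed $1$-manifold $J$ inside $S_q^k\times(t_0,t_1)$, absorbing closed components into the open arc by connected sum, and then straightening the open arc using $\dim V\ge 6$. One small inaccuracy to flag: the framing of $J$ is measured as a normal framing in $S_q^k\times(t_0,t_1)$ (dimension $k+1\ge4$), not in $V_t^n\times(t_0,t_1)$ as you wrote, but this does not affect the argument since the dimension condition still puts you in the unobstructed range for deforming framed $1$-manifolds.
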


\begin{proof}
In this range of dimensions, $n-k\ge k\ge3$. So, $S_q^k\times (t_0,t_1)$ has dimension $\ge4$ and there is no obstruction to deforming framed $1$-manifolds in $S_q^k\times (t_0,t_1)$. If the framed bordism invariant of $f_t$ is trivial, i.e., if it lies in $(\ZZ_2\oplus\pi_2M)[e]$, each component of $J$ can be absorbed into the open component by connected sum. Since $\dim V\ge 6$, the open component can be straightened making $f_t$ into the identity element of $\pi_0\cD_0(M)$. Thus $\lambda_0$ is a monomorphism.

To show that $\lambda_0$ is surjective, take any generator $(a,b)[\sigma]\in(\ZZ_2\oplus\pi_2M)[\pi_1M]$. This is represented by a framed circle $J_0$ in $S_q^k\times (t_0,t_1)$ together with a map $J_0\to \Omega M$. Since $\dim M\ge6$ this can be realized as a one parameter family of Morse functions. We create two critical points $p_t,q_t$ of index $k,k+1$ in cancelling position. Then create two extra geometric incidences which add and subtract $\sigma$ from the incidence number. Then cancel them again after moving them around using $(a,b)\in \ZZ_2\oplus\pi_2M$. The result has only one geometric incidence between the two critical points and thus they can be cancelled. This creates the desired lens-space module showing that $\lambda_0$ is surjective. 
\end{proof}

We want to extend the framed bordism invariant for $\pi_0\cD_0(M)$ to the case $n=4$ (and $n=5$). In both $n=4,5$, we have $k=2$. So, $S_q^k\times (t_0,t_1)$ has dimension $3$ and the framed 1-dimensional manifold $J_0$ in Definition \ref{def: framed bordism invariant for D0(M)} has integer framing invariant (for the same reason that $\pi_3S^2=\ZZ$). In dimension $n=4$ we cannot prove the analogue of Lemma \ref{lem: framed bordism invariant of D0(M)}. (The case $n=5$ will be discussed further in \cite{Wh1b}.) However, the framed bordism invariant of Definition \ref{def: framed bordism invariant for D0(M)} is still defined and can be refined as follows.

\begin{prop}\label{prop: integer framed bordism invariant}
For $n=4$ or $5$, the framed bordism invariant factors through a well-defined integer framed bordism invariant which is a homomorphism
\[
	\widetilde\lambda_0:\pi_0\cD_0(M)\to (\ZZ\oplus\pi_2M)[\pi_1M]/(\ZZ\oplus\pi_2M)[e].
\]
\end{prop}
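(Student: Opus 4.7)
\textbf{Proof proposal for Proposition \ref{prop: integer framed bordism invariant}.}

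The plan is to exploit low dimensionality. For $n=4,5$ we have $k=2$, so the ambient space $S_q^{k}\times(t_0,t_1)\cong S^{2}\times\RR$ in which the framed $1$-manifold $J_0$ of Definition \ref{def: framed bordism invariant for D0(M)} lives is only $3$-dimensional. A closed framed $1$-manifold in a $3$-manifold, with codimension-two framing of its normal bundle, has Pontryagin--Thom class in $\pi_3 S^2=\ZZ$ given by the Hopf invariant, refining the stable class in $\pi_1^s=\ZZ_2$ used by Hatcher and Wagoner; the suspension $\pi_3 S^2\to\pi_1^s$ is reduction mod $2$, so any $\ZZ$-valued refinement automatically lifts $\lambda_0$.

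First I would define the invariant. For each closed component $C\subset J_0$, the marking supplies an orientation of $C$ and a trivialization of its $2$-dimensional normal bundle inside $S_q^{2}\times(t_0,t_1)$, obtained by transporting the oriented negative eigenspace of $D^2f_{t_0}$ at the birth point along the Morse critical-point loci. Pushing $C$ off along the framing yields a parallel copy $C^{+}$, and I let the integer contribution of $C$ be the self-linking number $\ell(C):=\mathrm{lk}(C,C^{+})\in\ZZ$. Together with the homotopy class of the map $C\to\Omega M$ as in Definition \ref{def: framed bordism invariant for D0(M)}, this produces an element $\widetilde\lambda_0(f_t)\in (\ZZ\oplus\pi_2M)[\pi_1M]/(\ZZ\oplus\pi_2M)[e]$ whose reduction mod $2$ is $\lambda_0(f_t)$.

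Next, well-definedness on $\pi_0\cD_0(M)$. Two marked lens-shaped models in the same component are joined by a generic path $(f_{s,t})_{s\in[0,1]}$ in $\cD_0(M)$. Sweeping $s$ produces a family of framed $1$-manifolds that varies by framed isotopy in the $3$-manifold $S_q^{2}\times(t_0,t_1)$ except at finitely many parameters $s_i$ at which a generic quadratic tangency between the stable sphere $S_q^{2}$ and the unstable sphere $S_p^{n-k}$ inside the level surface $V_t$ occurs. The local change to $J_0$ at such an event is the birth or death of a small standardly-framed unknotted circle in a coordinate ball; this circle has self-linking $0$ and nullhomotopic image in $\Omega M$, so its contribution to $\widetilde\lambda_0$ lies in $(\ZZ\oplus\pi_2M)[e]$. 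Between tangencies the self-linking of each closed component is a framed-isotopy invariant, so $\widetilde\lambda_0$ is constant modulo $(\ZZ\oplus\pi_2M)[e]$.

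The homomorphism property follows as for $\lambda_0$: the product $f^a\ast f^b$ of Lemma \ref{lem: D0(M) is a group} places the eyes of $f^a$ and $f^b$ in disjoint parameter regions, connected only through the open arc, so $J_0(f^a\ast f^b)=J_0(f^a)\sqcup J_0(f^b)$ as framed submanifolds and self-linking is additive on disjoint unions. The main obstacle will be the tangency analysis in the well-definedness step: one must enumerate the codimension-one degenerations of the transverse intersection $S_q^{2}\cap S_p^{n-k}$ in a generic $1$-parameter family inside $\cD_0(M)$ and check, in local coordinates, that each such event changes $J_0$ only by the birth or death of a standardly framed unknot, never producing a knotted or non-trivially framed component in a single generic event.
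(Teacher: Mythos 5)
Your approach is essentially the same as the paper's: the paper states Proposition~\ref{prop: integer framed bordism invariant} without a formal proof, justifying it only by the preceding remark that for $n=4,5$ one has $k=2$, so the framed $1$-manifold $J_0$ lives in the $3$-dimensional $S_q^k\times(t_0,t_1)$ and therefore carries an integer framing invariant ``for the same reason that $\pi_3S^2=\ZZ$.'' Your elaboration --- identifying the integer as the Hopf/self-linking number, noting that the suspension $\pi_3S^2\to\pi_1^s$ is reduction mod $2$, and sketching well-definedness and additivity --- is consistent with the paper's reasoning, and the residual tangency analysis you flag as remaining work is likewise left implicit in the paper.
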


\subsection{Dependence on markings} We show that the integer framed bordism invariant $\widetilde\lambda_0$ is $\pi_1M\times\ZZ_2$ equivariant.

\begin{lem}\label{lem: dependence on markings}
For $n=4,5$, the integer framed bordism invariant $\widetilde\lambda_0$ from Proposition \ref{prop: integer framed bordism invariant} is equivariant with respect to the action of $\pi_1M\times\ZZ_2$ given on $\cD_0(M)$ by changing the markings and on $(\ZZ\oplus\pi_2M)[\pi_1M]$ by the diagonal action of $\pi_1M$ as in \eqref{eq: diagonal action of pi} and the sign action of $\ZZ_2$ on $\ZZ$. 
\end{lem}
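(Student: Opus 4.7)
The plan is to verify the equivariance separately for the two factors $\pi_1M$ and $\ZZ_2$, each time tracing how the ingredients of Definition \ref{def: framed bordism invariant for D0(M)} transform under the corresponding change of markings.

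For the $\pi_1M$-action, I would replace the marking path $\gamma$ (from the basepoint to the birth point $b_0$) by $\sigma\gamma$ for $\sigma\in\pi_1M$. Each closed component $C$ of $J_0$ contributes a loop in $\Omega M$ obtained by concatenating $\gamma$, the Morse arcs along $q_t$ and $p_t$, a gradient trajectory of $-\nabla f_t$, and $\gamma^{-1}$. Replacing $\gamma$ by $\sigma\gamma$ conjugates this loop by $\sigma$, so the $\pi_1M$-index of $C$ changes by $g\mapsto\sigma g\sigma^{-1}$, while the class in $\pi_1(\Omega M,g)\cong\pi_2M$ transforms via the change-of-basepoint isomorphism induced by conjugation by $\sigma$, which coincides with the canonical $\pi_1M$-module structure on $\pi_2M$. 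The integer framing of $J_0$ in $S_q^k\times(t_0,t_1)$ is intrinsic to $f_t$ itself and is independent of $\gamma$. Combined, this is exactly the diagonal action \eqref{eq: diagonal action of pi}.

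For the $\ZZ_2$-action, I would trace the effect of reversing the orientation of the negative eigenspace of $D^2f_{t_0}$ at $b_0$. Propagation along the Morse arcs $q_t$ and $p_t$ shows this simultaneously reverses the orientation of $S_q^k$ and the normal framing of $S_p^{n-k}$. The gradient trajectories between $p_t$ and $q_t$, and hence the loops in $\Omega M$ that they define, do not depend on these orientation data, so the $\pi_1M$-index of each component of $J_0$ is unchanged. The induced orientation of $J_0=S_q^k\cap S_p^{n-k}$ in $V_t$ depends on both the orientation of $S_q^k$ and the normal framing of $S_p^{n-k}$, so the two reversals cancel and the class of $J_0\to\Omega M$ in $\pi_1(\Omega M,g)\cong\pi_2M$ is left invariant. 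The Hopf-type integer framing invariant of $J_0$ in the $3$-manifold $S_q^k\times(t_0,t_1)$, however, depends on the ambient orientation of $S_q^k$ and on the normal framing of $S_p^{n-k}$ in an unbalanced way, so exactly one sign contribution survives and flips the integer invariant. This realizes the sign action of $\ZZ_2$ on $\ZZ$ and the trivial action on $\pi_2M$.

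The main obstacle will be the sign bookkeeping in the $\ZZ_2$ case: one must identify carefully how the orientation of $S_q^k$ and the normal framing of $S_p^{n-k}$ enter both the orientation of $J_0$ (where they combine multiplicatively, so two sign flips cancel and the $\pi_2M$-coefficient is preserved) and the normal $2$-framing of $J_0$ in $S_q^k\times(t_0,t_1)$ (where only one sign flip survives, reversing the integer framing). Once this local calculation is pinned down, the equivariance is immediate, with the $\pi_1M$-part being essentially formal.
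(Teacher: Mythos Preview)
Your proposal is correct and follows essentially the same route as the paper: treat the $\pi_1M$- and $\ZZ_2$-actions separately, observe that the path change conjugates the $\pi_1M$-label and acts on $\pi_2M$ while leaving the integer untouched, and that the orientation reversal flips both the orientation of $S_q^2$ and the normal framing of $S_p^{n-k}$, leaving the tangential orientation of $J_0$ (hence the $\pi_2M$-part) fixed but negating the integer framing. The paper pins down the ``unbalanced'' sign you flag by the concrete analogy with $\pi_3S^2$: reversing the orientation of the target $S^2$ (the effect of flipping the normal framing of $S_p$) does nothing to the Hopf invariant, while reversing the orientation of the source $S^3$ (the effect of flipping the ambient orientation of $S_q^2\times(t_0,t_1)$) negates it, so the self-intersection number of $J$ in $S_q^2\times(t_0,t_1)$ changes sign.
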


To clarify the statement, the group of coinvariants of the action of $\pi_1M\times\ZZ_2$ on $(\ZZ\oplus\pi_2M)[\pi_1M]/(\ZZ\oplus\pi_2M)[e]$ is equal to $Wh_1^+(\pi_1M;\ZZ_2\oplus \pi_2M)$. In other words, the sign of the integer framing number of $J_0$ is not intrinsic. It just comes from the marking. This implies the following.

\begin{prop}
For $n=4,5$ the integer framed bordism invariant induces a well-defined mapping
\[
	\overline\lambda:\pi_0\cD(M)\to Wh_1^+(\pi_1M;\ZZ_2\oplus \pi_2M).
\]
\end{prop}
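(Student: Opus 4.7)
The plan is to realize $\overline\lambda$ as the factorization of $\widetilde\lambda_0$ through projection to the $(\pi_1M\times \ZZ_2)$-coinvariants. First I would observe that the forgetful map $\phi\colon \pi_0\cD_0(M)\onto \pi_0\cD(M)$ is surjective, because every lens-shaped model can be marked by freely choosing a homotopy class of path in $M\times I^2$ from the basepoint to the birth point and an orientation of the negative eigenspace there. The set of markings on a fixed lens-shaped model is a torsor over $\pi_1M\times \ZZ_2$, and $\pi_1M\times \ZZ_2$ acts on $\pi_0\cD_0(M)$ by modifying the marking. Any two preimages under $\phi$ of a given $[f]\in \pi_0\cD(M)$ can be compared by lifting a path in $\cD(M)$ joining representatives to a path of markings in $\cD_0(M)$; the resulting discrepancy between the transported marking and the second chosen marking is an element of $\pi_1M\times \ZZ_2$.

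Next I would define $\overline\lambda([f])$ by choosing any marked lift $[\widetilde f_1]\in \pi_0\cD_0(M)$ of $[f]$ and setting
\[
\overline\lambda([f]) := \rho\bigl(\widetilde\lambda_0([\widetilde f_1])\bigr),
\]
where $\rho$ is the canonical projection from $(\ZZ\oplus \pi_2M)[\pi_1M]/(\ZZ\oplus \pi_2M)[e]$ to its $(\pi_1M\times \ZZ_2)$-coinvariants. Well-definedness follows immediately from Lemma \ref{lem: dependence on markings}: given a second lift $[\widetilde f_2]$, the path-lifting construction above shows it differs from $[\widetilde f_1]$ by the action of some $(\sigma,\epsilon)\in \pi_1M\times \ZZ_2$, and the equivariance supplied by the lemma together with the very definition of coinvariants gives $\rho\widetilde\lambda_0([\widetilde f_1])=\rho\widetilde\lambda_0([\widetilde f_2])$.

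It remains to identify the target. Since $\ZZ_2$ acts on $\ZZ$ by sign and trivially on $\pi_2M$, the $\ZZ_2$-coinvariants of $(\ZZ\oplus \pi_2M)[\pi_1M]/(\ZZ\oplus \pi_2M)[e]$ equal $(\ZZ_2\oplus \pi_2M)[\pi_1M]/(\ZZ_2\oplus \pi_2M)[e]$, and taking further $\pi_1M$-coinvariants under the diagonal action \eqref{eq: diagonal action of pi} yields exactly $Wh_1^+(\pi_1M;\ZZ_2\oplus \pi_2M)$ by Definition \ref{def: Wh1+ Z}. The substantive work has been concentrated in Lemma \ref{lem: dependence on markings}; once that equivariance is in hand, the present proposition is a formal consequence of passing to coinvariants. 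The only mildly delicate point, and the step I would verify most carefully, is the path-lifting of markings used above to reduce comparison of two arbitrary lifts to the single action of an element of $\pi_1M\times \ZZ_2$, but this is routine since markings form a cover of the underlying space of lens-shaped models.
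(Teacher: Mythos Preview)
Your proposal is correct and follows essentially the same route as the paper: the proposition is stated there as an immediate consequence of Lemma~\ref{lem: dependence on markings} together with the observation (made in the clarifying paragraph just before the proposition) that the $(\pi_1M\times\ZZ_2)$-coinvariants of $(\ZZ\oplus\pi_2M)[\pi_1M]/(\ZZ\oplus\pi_2M)[e]$ equal $Wh_1^+(\pi_1M;\ZZ_2\oplus\pi_2M)$. Your write-up simply makes explicit the surjectivity of the forgetful map and the torsor/covering structure of markings that the paper leaves implicit.
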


To understand the proof of Lemma \ref{lem: dependence on markings}, it helps to consider a concrete equivalent problem. What happens to the generator of $\pi_3S^2$ when we reverse the orientation for both source and target? When we reverse the orientation of $S^2$, nothing happens! This is because the Hopf map $S^3\to \CC P^1=S^2$ commutes with complex conjugation which is homotopic to the identity on $S^3$ but reverses the orientation of $\CC P^1=S^2$. Reversing the orientation of $S^3$ is negation on $\pi_3S^2$ by definition of its group structure. Therefore, reversing the orientation of both $S^2$ and $S^3$ acts on $\pi_3S^2=\ZZ$ by changing sign.

\begin{proof}[Proof of Lemma \ref{lem: dependence on markings}]
Each closed component of the framed 1-manifold $J_0$ gives an oriented cycle in the loop space of $M\times I^2$ at the birth point of $f_t$. This contributes $(n,a)[\sigma]$ to the integer framing invariant where $n\in \ZZ$, $a\in \pi_2M$ and $\sigma\in \pi_1M$. When we change the orientation of the descending plane of the birth point, we change the orientation of the descending sphere $S^2_q\times (t_0,t_1)$ and the normal orientation of the ascending sphere $S^{n-2}_p\times (t_0,t_1)$. The result is that tangential orientation of the intersection $J\subset S^2_q\times (t_0,t_1)$ is unchanged. However, the self-intersection number of $J$ in $S^2_q\times (t_0,t_1)$ changes sign. This will change the invariant from $(n,a)[\sigma]$ to $(-n,a)[\sigma]$. This is similar to what happens with $\pi_3S^2$.

The other part of the marking is the path from the basepoint of $M\times I^2$ to the birth point of $f_t$. is changed by, say $\gamma\in\pi_1M$, this invariant will change to $(n,\gamma a)[\gamma\sigma\gamma^{-1}]$. The framing number $n$ does not change since it does not use the path to the basepoint.
\end{proof}

\subsection{Involution}\label{ss: involution}

We also have a very useful involution $\varepsilon$ which flips the function $f$ ``upside-down''. It is given by
\[
	\varepsilon f(x,t)=1-f(x,1-t).
\]
This is projection to the second coordinate of the function $\varepsilon g$ given for each $g(x,t)=(h(x,t),f(x,t))$ in $\cC(M)$ by
\[
	\varepsilon g(x,t)=(r^{-1}h(x,1-t),1-f(x,1-t))
\]
where $r$ is the automorphism of $M$ given by $g(x,1)=(r(x),1)$. Since $\varepsilon$ switches the stacking direction for functions (Proposition \ref{prop: stacking functions}) we have 
\[
	\varepsilon(f\bullet f')=\varepsilon(f')\bullet\varepsilon(f).
\]
The involution $\varepsilon$ turns the graphic of a function upside-down. Also, it changes the index $k$ of a Morse critical point $(x,t)$ to index $n+1-k$ (and the critical point moves to $(x,1-t)$). Thus, in the case when $\dim M=n$ is even, this involution sends $\cD(M)$ to $\cD(M)$.

\begin{prop}\label{prop: involution on Wh1+ invariant}
Suppose that $n\ge4$ is even and $(f_t)\in \cD(M)$ with $\overline\lambda(f_t)=(n,\alpha)[\sigma]\in Wh_1^+(\pi_1M;\ZZ_2\oplus \pi_2M)$ where $n\in\ZZ_2$, $\alpha\in\pi_2M$ and $\sigma\in\pi_1M$. Then
\[
	\overline\lambda(\varepsilon(f_t))=(n+w_2(\alpha),-\alpha)[\sigma^{-1}].
\]
where $w_2\in H^2(M;\ZZ_2)$ is the second Stiefel-Whitney class of $M$.
\end{prop}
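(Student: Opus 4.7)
My plan is to track each piece of the invariant $(n,\alpha)[\sigma]$ under the involution $\varepsilon$. Since $n = 2k$ is even, the Morse indices $k$ and $k+1$ of a lens-shaped model are exchanged by the reflection $i\mapsto (n+1)-i$, so $\varepsilon$ swaps the critical points $p_t$ (index $k$) and $q_t$ (index $k+1$), and the new birth point coincides with the old death point. Because $n=2k$, the descending sphere $S^k_q$ and the ascending sphere $S^{n-k}_p=S^k_p$ have the same dimension $k$ and interchange roles under $\varepsilon$: the descending sphere at the new index-$(k+1)$ critical point $q'=p_{1-t}$ is the old $S^k_p$, and the ascending sphere at the new index-$k$ critical point $p'=q_{1-t}$ is the old $S^k_q$. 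The 1-manifold $J$ of trajectories from $q_t$ to $p_t$ is unchanged as a subset of $M\times I^2$; each trajectory is simply traversed in the reverse sense.

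Next I would analyze how $\sigma$ and $\alpha$ transform. Each trajectory in the closed component $J_0$, closed up by the marking paths to the basepoint, represents a loop whose class is $\sigma$; reversing the trajectory direction and rerouting through the new marking (which now terminates at the old death point, reached along the $p$-branch of the Morse locus) replaces this loop by its inverse $\sigma^{-1}$. The class $\alpha\in\pi_2 M\cong\pi_1\Omega M$ is the homotopy class of the loop $J_0\to\Omega M$ parametrized by the induced orientation of $J_0\subset S^k_q\times(t_0,t_1)$. The simultaneous swap $S^k_q\leftrightarrow S^k_p$ and the reversal of the parameter interval $(t_0,t_1)\mapsto(1-t_1,1-t_0)$ together flip this orientation, so the loop $J_0\to\Omega M$ is traversed backwards and represents $-\alpha$.

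The most delicate step is the shift in the integer framing. Lifting to the integer-valued $\widetilde\lambda_0$, I would view $n$ as a Hopf-type invariant of $J_0$ as a framed $1$-manifold in the $3$-manifold $S^k_q\times(t_0,t_1)\cong S^2\times\RR$, where the framing comes from the normal framing of $S^{n-k}_p$ in $V_t$ restricted to $J_0$. After $\varepsilon$ the ambient $3$-manifold is instead $S^k_p\times(1-t_1,1-t_0)$ and the framing comes from the normal of $S^k_q$. The discrepancy between these two framings along $J_0$ is measured by the transverse self-intersections in $M$ of the immersed $2$-sphere traced out by $J_0$, which represents $\alpha\in\pi_2M$: each double point of this immersion contributes one $\ZZ_2$ twist to the framing comparison. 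By the Wu formula on the $4$-manifold $M$, the mod-$2$ count of these self-intersections equals $w_2(M)\cdot\alpha = w_2(\alpha)$. Combined with the overall sign reversal $n\mapsto -n$ coming from the orientation reversal of both source and target (which is invisible modulo $2$, as already noted in the proof of Lemma \ref{lem: dependence on markings}), this yields the claimed shift $n\mapsto n+w_2(\alpha)$ in $\ZZ_2$.

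Assembling the three contributions gives $\overline\lambda(\varepsilon f_t)=(n+w_2(\alpha),-\alpha)[\sigma^{-1}]$ in $Wh_1^+(\pi_1M;\ZZ_2\oplus\pi_2M)$. The main obstacle is the framing step: making the Pontryagin--Thom interpretation of the integer invariant precise, comparing the two framings of $J_0$ in the two different ambient $3$-manifolds $S^k_q\times I$ and $S^k_p\times I$, and identifying their difference with the mod-$2$ self-intersection number of the representing $2$-sphere via the Wu formula. The transformations of $\sigma$ and $\alpha$, while requiring careful bookkeeping of the marking paths and of the induced orientation on $J_0$, are essentially formal once the swap of roles of $S^k_q$ and $S^{n-k}_p$ is understood.
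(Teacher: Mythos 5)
Your overall plan matches the paper's in structure---reduce to a closed component of $J$, track $\sigma$, $\alpha$, and the framing separately---but two of the three pieces are handled by arguments that are either wrong or materially different from the paper's, and the framing step has a genuine gap.

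\textbf{The involution does not reverse the parameter interval.} You assert that $\varepsilon$ sends the parameter interval $(t_0,t_1)$ to $(1-t_1,1-t_0)$ and use this, combined with the swap $S^k_q\leftrightarrow S^k_p$, to argue that the orientation of $J_0$ flips, whence $\alpha\mapsto-\alpha$. This is incorrect: $\varepsilon$ is applied fiberwise, replacing each $f_t$ by $1-f_t(\,\cdot\,,1-\,\cdot\,)$ on $M\times I$. It flips the \emph{value} coordinate of $M\times I$, turning the graphic upside-down in the $s$-direction, not left--right in $t$. The birth and death parameter values $t_0,t_1$ are unchanged, and as the paper explicitly notes, the tangential orientation of $J$---defined as pointing from birth to death along the open component---is preserved. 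The correct mechanism for $\alpha\mapsto-\alpha$ is different: the orientation of $J$ is the same, but the loops in $M$ over each point of $J_0$ (from $b_0$ up to $q_t$, down through a trajectory to $p_t$, and back) are now traversed in the opposite sense. The sign flip of $\alpha$ is forced by the reversal of the $\sigma$-direction, not by a flip of the tangential orientation of $J_0$. Your argument would actually prove that the orientation \emph{does not} flip if the false premise about the parameter interval were removed, so the reasoning cannot simply be patched.

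\textbf{The framing step uses Wu and leaves the key identification unproved.} You want to identify the discrepancy between the two framings of $J_0$ with the mod-$2$ count of double points of an immersed $2$-sphere in $M$ and then invoke the Wu formula. The paper argues differently and more directly: write the $2$-sphere representing $\alpha$ as $D^2_+\cup D^2_-$ with equator $C$, $D^2_+$ in the descending manifold of $q_t$ and $D^2_-$ in the ascending manifold of $p_t$. Then $n$ is the comparison along $C$ of the normal framing of $D^2_-$ in $D_p^{k+1}\times(t_0,t_1)$ with the tangential framing of $D^2_+$, and $n'$ is the comparison of the normal framing of $D^2_+$ with the tangential framing of $D^2_-$. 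Adding, $n+n'$ is precisely the obstruction to extending a framing of the normal bundle of the sphere $D^2_+\cup D^2_-$ in $M\times I^2$ from one hemisphere to the other, which is $w_2(\alpha)$. No Wu formula, no self-intersection count, and no orientability hypothesis on $M$ is needed. Your assertion that ``each double point of this immersion contributes one $\ZZ_2$ twist to the framing comparison'' is the crux of your alternative argument and is not established; making it precise is essentially as much work as the paper's hemispheres argument, and the detour through Wu quietly imports orientability (or a careful treatment of Wu classes) that the paper avoids.

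The $\sigma\mapsto\sigma^{-1}$ step is fine and agrees with the paper. But because the $\alpha$-step rests on a false description of $\varepsilon$, and because the framing step's central identification is left as an assertion, this proposal does not yet constitute a proof.
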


\begin{proof}
First, choose a marking for $(f_t)$. By linearity of the invariant, we can restrict attention to the contribution of one closed component, say $C$, of $J$, the intersection of the descending sphere $S^k_q\times (t_0,t_1)$ with the ascending sphere $S^k_p\times (t_0,t_1)$ in $V^{2k}\times (t_0,t_1)$.

To compute the invariant for $\varepsilon(f_t)$, we take the same family of functions $f_t$ with the same critical points $p_t,q_t$ but with the negative gradient $-\nabla f_t$ replaced with the positive gradient. The loop representing $\sigma$ in $\overline\lambda(f_t)$ goes from the birth point $b_0$ up to $q_t$, then down through $C$ to $p_t$ and back to $b_0$. The loop for $\overline\lambda(\varepsilon(f_t))$ goes the other way (from $p_t$ up to $q_t$ through $C$). So, $\sigma$ is replaced with $\sigma^{-1}$. 

The tangential orientation of $J$ is unchanged since, by definition, it points from the birth point to the death point on the open component. So, the component $C$ is tangentially oriented in the same direction for both $(f_t)$ and $\varepsilon(f_t)$. But, the $\sigma$ direction is changed. So, the $\pi_2M$ element changes sign from $\alpha$ to $-\alpha$. This element $\alpha\in \pi_2M$ is represented by an embedded $2$-sphere with equator $C$, upper hemisphere $D^2_+$ contained in the descending manifold $D^{k+1}_q\times (t_0,t_1)$ of $q_t$ and lower hemisphere $D^2_-$ contained in the ascending manifold $D^{k+1}_p\times(t_0,t_1)$ of $p_t$. For $\varepsilon(f_t)$ the roles of $D^2_+$, $D^2_-$ are switched.

The framing number $n$ of $C$ in $\overline\lambda(f_t)$ is given by comparing the normal framing of $S_p^k\times (t_0,t_1)$ along $C$, which extends to a normal framing of $D^2_-$ in $D_p^{k+1}\times (t_0,t_1)$, with the tangential framing of $D^2_+$ in $D_q^{k+1}\times (t_0,t_1)$. The framing number $n'$ of $C$ in $\overline\lambda(\varepsilon(f_t))$ is given by comparing the normal framing of $D^2_+$ in $D_q^{k+1}\times (t_0,t_1)$ with the tangential framing of $D^2_-$ in $D_p^{k+1}\times (t_0,t_1)$. So, the sum $n+n'$ compares the normal bundle of $D^2_+$ in $M\times I^2$ with the normal bundle of $D^2_-$. In other words, $n+n'=w_2(D^2_+\cup D^2_-)=w_2(\alpha)$.
\end{proof}

Proposition \ref{prop: involution on Wh1+ invariant} will guide us in the remainder of this paper. We will assume that $M$ is a $4$-manifold with an embedded $2$-sphere with odd $w_2$. For example, we could take connected sum with $\CC P^2$. Using an embedded $2$-sphere we will construct a lens-shaped model for $M$ with invariant $\alpha[\sigma]$ where $\alpha\in \pi_2M$ is the class of the embedded $2$-sphere and $\sigma$ is any element of $\pi_1M$. The explicit construction will allow us to compute the framing number. It will be the degree of the normal bundle of the embedded $2$-sphere in $M$, i.e., its self-intersection number. We do the same using $\sigma^{-1}$ and apply the involution $\varepsilon$. By Proposition \ref{prop: involution on Wh1+ invariant}, the sum will give all nontrivial elements of $Wh_1^+(\pi_1M;\ZZ_2)$.

\section{Constructing pseudoisotopies using two disks}

To prove Theorems \ref{thm A} and \ref{thm B} we need embedded disks: $D_{q}^3,D_{p}^2\subset M^4$ as shown in Figure \ref{Fig: two disks}. We use the fact that the \emph{self-intersection number} of the $2$-sphere $\CC P^1$ in $\CC P^2$, i.e., the degree of its normal bundle, is 1. Conversely, if $M$ contains an embedded $2$-sphere with self-intersection number $1$, it must have the form $M=X\# \CC P^2$.

\begin{lem}\label{lem: existence of two disks} Let $S_0^2\subset M$ be an embedded $2$-sphere with self-intersection number $w$. Given any $\sigma\in \pi_1M$, there are embedded disks $D_{q}^3\subset M^4$ and $D_{p}^2\subset M^4$ with boundaries $S_{q}^2=\partial D_{q}^3$, $S_{p}^1=\partial D_{p}^2$ with the following properties (Figure \ref{Fig: two disks}).
\begin{enumerate}
\item $S_{q}^2\cap D_{p}^2=\{a\}\subset int\,D_{p}^2$
\item $D_{q}^3\cap S_{p}^1=\{b\}\subset int\,D_{q}^3$
\item $D_{q}^3\cap D_{p}^2$ is the disjoint union of
	\begin{enumerate}
	\item a line segment from $a$ to $b$ (denoted $J$)
	\item a circle $S_\ast^1$.
	\end{enumerate}
\item Any path from $b$ to $S_\ast^1$ in $D_{q}^3$ followed by any path from $S_\ast^1$ to $b$ in $D_{p}^2$ is homotopic to (a conjugate of) the chosen element $\sigma\in \pi_1M$.
\item $S_\ast^1$ bounds two $2$-disks $B_{q}^2\subset D_{q}^3$ and $B_{p}^2\subset D_{p}^2$ whose union is homotopic to $S_0^2$.
\item The normal framing of $S_\ast^1$ in $D_{q}^3$ given by any trivialization of the normal bundle of $D_{p}^2$ in $M$ has winding number $w$.
\end{enumerate}
\end{lem}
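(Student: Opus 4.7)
The plan is to build $D_q^3$ and $D_p^2$ each as a concatenation of three pieces: a \emph{standard piece} in a small $4$-ball $B_0\subset M\setminus S_0^2$ around a basepoint, a pair of \emph{tubes} along chosen paths, and a \emph{sphere piece} near $S_0^2$. The standard piece is the usual canceling-pair model; e.g., in coordinates on $B_0\cong\RR^4$, take $D_q^{3,\mathrm{st}}=\{w=0,\ x^2+y^2+z^2\le1\}$ and the half-disk $D_p^{2,\mathrm{st}}=\{y=z=0,\ x\ge0,\ x^2/4+w^2\le1\}$, whose only intersection is a single arc $J$ from $a=(1,0,0,0)\in S_q^{2,\mathrm{st}}\cap\mathrm{int}\,D_p^{2,\mathrm{st}}$ to $b=0\in\mathrm{int}\,D_q^{3,\mathrm{st}}\cap S_p^{1,\mathrm{st}}$. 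This handles items (1), (2), and the $J$-part of (3).

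For the sphere piece, pick a small embedded disk $B_p^2\subset S_0^2$ with boundary circle $S_\ast^1$, and let $\tilde B_q^2:=S_0^2\setminus\mathrm{int}(B_p^2)$. Because $\tilde B_q^2$ is contractible, the rank-$2$ normal bundle of $S_0^2$ over a slight enlargement of $\tilde B_q^2$ admits two everywhere linearly independent sections $\mu$ and $\eta$. Thickening this enlargement in the $\mu$-direction produces a $3$-disk $D_q^{3,\mathrm{sph}}$ whose interior contains both $S_\ast^1$ and a distinguished sub-disk $B_q^2$ (a small push-off of $\tilde B_q^2$ into the $\mu$-slab with $\partial B_q^2=S_\ast^1$ sitting back on $S_0^2$). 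Extending $B_p^2$ by a thin annulus in the $\eta$-direction and smoothing the join produces a $2$-disk $D_p^{2,\mathrm{sph}}\supset B_p^2$ with an outer boundary $S_p^{1,\mathrm{sph}}$ lying off the $\mu$-slab; because $\mu$ and $\eta$ are linearly independent, a direct calculation gives $D_p^{2,\mathrm{sph}}\cap D_q^{3,\mathrm{sph}}=S_\ast^1$.

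Next, choose two disjoint embedded smooth paths $\tau_q,\tau_p$ in $M$, the first joining $\partial B_0$ to a point on $\partial D_q^{3,\mathrm{sph}}$ and the second joining $\partial B_0$ to a point on $\partial D_p^{2,\mathrm{sph}}$, so that $\tau_q\cdot\tau_p^{-1}$ is conjugate to $\sigma$ in $\pi_1M$. Thicken $\tau_q$ to a $3$-dimensional tube $T_q$ and $\tau_p$ to a thin $2$-dimensional strip $T_p$, each meeting the other pieces only along the prescribed gluing disks. Set
\[
D_q^3:=D_q^{3,\mathrm{st}}\cup T_q\cup D_q^{3,\mathrm{sph}},\qquad D_p^2:=D_p^{2,\mathrm{st}}\cup T_p\cup D_p^{2,\mathrm{sph}};
\]
each is a smoothly embedded disk of the required dimension. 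Properties (3)(b), (4), and (5) then follow routinely: the circle $S_\ast^1$ part of (3) is in the sphere piece; (4) holds because $D_q^3$ and $D_p^2$ are simply connected, so the loop $b\to S_\ast^1\to b$ is homotopic to $\tau_q\cdot\tau_p^{-1}\simeq\sigma$ up to conjugation by a path in $B_0$; and (5) is clear since $B_q^2\cup B_p^2$ becomes $S_0^2$ after homotoping away the small $\mu$-push-off of $B_q^2$.

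The hard part will be the framing statement (6). A trivialization of the rank-$2$ normal bundle of $D_p^2$ in $M$, restricted to $S_\ast^1\subset D_p^2$, determines a framing of the normal bundle of $S_\ast^1$ in $D_q^3$ via the isomorphism
\[
T_xD_q^3/T_xS_\ast^1\;\xrightarrow{\ \cong\ }\;T_xM/T_xD_p^2,
\]
valid because $D_q^3\cap D_p^2$ is $1$-dimensional along $S_\ast^1$, so $T_xD_q^3\cap T_xD_p^2=T_xS_\ast^1$. I would compare this $D_p^2$-framing with the reference framing supplied by $B_q^2\subset D_q^3$, namely the inward tangent of $B_q^2$ along $\partial B_q^2=S_\ast^1$ together with the transverse $\mu$-direction in $D_q^3$. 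The relative rotation of the two framings along $S_\ast^1$ is precisely the clutching function of the normal bundle of the embedded $2$-sphere $B_q^2\cup B_p^2\simeq S_0^2$ across its equator, whose winding number is the Euler number of that normal bundle, i.e., the self-intersection $w$ of $S_0^2$. Carrying out this identification with the correct signs is the heart of the argument; everything else is routine bookkeeping.
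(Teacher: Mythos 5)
Your overall strategy --- decompose each disk into a standard piece near a basepoint ball, a tube realizing $\sigma$, and a piece supported near $S_0^2$, then identify the framing defect with the Euler number of $\nu(S_0^2)$ --- is the same in spirit as the paper's. But the paper's implementation is cleaner and sidesteps two places where your version has real gaps.

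First, the intersection $D_p^{2,\mathrm{sph}}\cap D_q^{3,\mathrm{sph}}$. You must thicken an \emph{enlargement} of $\tilde B_q^2$ to form $D_q^{3,\mathrm{sph}}$, since $S_\ast^1$ has to lie in the interior of $D_q^3$ (by (1), $\partial D_q^3$ meets $D_p^2$ only in $a$). But then the enlarged slab overshoots into $B_p^2\subset S_0^2$, while $D_p^{2,\mathrm{sph}}\supset B_p^2$ also lies in $S_0^2$ there; so their intersection contains an annular collar of $S_\ast^1$ inside $S_0^2$, not just $S_\ast^1$. The observation that $\mu\ne\eta$ controls only the part of $D_p^{2,\mathrm{sph}}$ that has left the zero section, not this in-surface overlap. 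The paper avoids the problem by taking $D_p^2$ to \emph{be} the southern hemisphere of $S_0^2$ (so $S_\ast^1$ is automatically interior to $D_p^2$), and building the sphere piece $D_1^3$ as the restricted normal disk bundle $S_\ast^1\times D^2$ of $D_p^2$ --- which meets $D_p^2$ in exactly $S_\ast^1$ by the tubular neighborhood theorem --- plus a $2$-handle attached along a curve on $\partial(S_\ast^1\times D^2)$ disjoint from $S_0^2$.

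Second, the framing statement (6), which you rightly flag as the crux, is not quite as simple as ``relative rotation equals clutching function.'' The union $B_q^2\cup B_p^2$ necessarily has a corner along $S_\ast^1$: if $T_xB_q^2=T_xB_p^2$ there, then $T_xD_p^2\subset T_xD_q^3$ and the required transversality of $D_q^3$ and $D_p^2$ along $S_\ast^1$ fails. So the normal bundle of $B_q^2\cup B_p^2$ and its clutching function only make sense after a smoothing, and the chain of isomorphisms identifying the relative rotation of your two framings of $\nu(S_\ast^1,D_q^3)$ with that clutching function must be tracked through the smoothing and the isomorphism $T_xD_q^3/T_xS_\ast^1\cong T_xM/T_xD_p^2$; this is precisely what you leave undone. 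The paper instead glues the $2$-handle along the section $\widetilde S_\ast^1$ of $\partial(S_\ast^1\times D^2)\to S_\ast^1$ that extends to a section of the normal circle bundle over $S_0^2\setminus B_p^2$. That section has winding number $w$ relative to the product trivialization by the clutching description of the Euler number, and $B_q^2$ is literally built by coning off this $w$-times-wound curve, so (6) holds by construction with no a posteriori comparison of framings.

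There is also a small slip in the coordinate model for the standard piece: with $D_p^{2,\mathrm{st}}=\{y=z=0,\ x\ge0,\ x^2/4+w^2\le1\}$ the free boundary arc $\{x^2/4+w^2=1,\ x\ge0\}$ (the part destined to be $S_p^1$) does not contain $b=0$; the origin lies on the diameter $\{x=0\}$ where this piece is glued to the tube $T_p$. As written, $b\notin S_p^{1,\mathrm{st}}$.
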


\begin{figure}[htbp]
\begin{center}
\begin{tikzpicture}

\coordinate (btilde) at (0,0);

\coordinate (C) at (0,0);
\coordinate (Cm) at (-.5,0);
\coordinate (A) at (2,0);
\coordinate (App) at (2,0.1);
\coordinate (Am) at (1.9,0);
\coordinate (Ap) at (2.5,0);
\coordinate (X) at (4.1,1.1);
\coordinate (Y) at (4.1,-1.1);
\coordinate (B) at (3,0);
\coordinate (J) at (2.5,0);
\coordinate (Bp) at (3,0.1);
\coordinate (Ta) at (-1.3,0);
\coordinate (Taa) at (-0,-3);
\coordinate (Tb) at (.3,0);
\coordinate (Tbb) at (0,-3);
\coordinate (Tcc) at (3.2,-3);
\coordinate (Tc) at (3.2,-.4);
\coordinate (Td) at (3.8,-.2);
\coordinate (Tcd) at (3.5,-.3);
\coordinate (Tdd) at (3.3,-4);
\coordinate (Tx) at (-.6,-2.9);
\coordinate (Ty) at (0.2,-2.7);
\coordinate (Tz) at (-.2,-2.8);
\begin{scope}[xshift=-1mm,yshift=2mm]
\draw (1.5,-2) node{$\sigma$};
\draw[<-] (1,-2)..controls (1,-2.5) and (2,-2.5)..(2,-1.8);
\end{scope}
\begin{scope} 
\draw[ thick, color=blue] (-1.3,0) --( -1.3,-2.5);
\draw[ thick, color=blue] (.3,0) --( .3,-2.5);
	\begin{scope} 
	\clip (-1.5,-2.5) rectangle (.5,-3);
	\draw[very thick, color=blue] (-.5,-2.5) ellipse[x radius=8mm,y radius=4mm];
\draw[fill,color=white] (Tz) circle[radius=4mm]; 
\end{scope}
	\draw[very thick,color=blue] (Tx) .. controls (Taa) and (Tdd).. (Td);
	\draw[very thick,color=blue] (Ty) .. controls (Tbb) and (Tcc).. (Tc);
\draw[fill,color=white] (C) ellipse [x radius=3cm,y radius=1.5cm] (B) circle [radius=1cm];
\draw[dashed, color=blue] (-1.3,0) --( -1.3,-2.5);
\draw[dashed, color=blue] (.3,0) --( .3,-2.5);
\end{scope}
\begin{scope} 
	\clip (-1.5,-2.5) rectangle (.5,-2);
	\draw[dashed, color=blue] (-.5,-2.5) ellipse[x radius=8mm,y radius=4mm];
\end{scope}
\begin{scope} 
	\draw[dashed, color=blue] (B) circle [radius=1cm];
	\clip (Ap) rectangle (Y);
	\draw[very thick, color=blue] (B) circle [radius=1cm];
\end{scope}
\draw[fill,color=white] (3.32,-.7) circle[radius=3.5mm];

\draw[very thick] (C) ellipse [x radius=3cm,y radius=1.5cm]; 
\begin{scope} 
	\clip (-2,0) rectangle (1,2.1);
	\draw[fill, color=white] (Cm) ellipse[x radius=8mm,y radius=2cm];
\draw[dashed] (C) ellipse [x radius=3cm,y radius=1.5cm];
	\draw[ thick, color=blue] (Cm) ellipse[x radius=8mm,y radius=2cm];
\end{scope}
\begin{scope} 
	\clip (Am) rectangle (X);
	\draw[thick, color=white, fill] (B) circle [radius=1cm];
	\draw[dashed] (C) ellipse [x radius=3cm,y radius=1.5cm];
	\draw[very thick, color=blue] (B) circle [radius=1cm];
\end{scope}
\begin{scope} 
\draw[dashed, color=red] (Cm) ellipse [x radius=8mm,y radius=4mm];
\clip (-2,0) rectangle (1,-1);
\draw[thick, color=red] (Cm) ellipse [x radius=8mm,y radius=4mm];
\end{scope}
\draw[thick, color=red] (A)--(B) (J)node[above]{$J$}; 
\draw (App) node[left]{$a$}; 
\draw (Bp) node[right]{$b$};
\coordinate (Dp3) at (3,-3); 
\coordinate (Dq2) at (-2.1,-.2); 
\coordinate (Sp2) at (3.5,1); 
\coordinate (Bp2) at (0,2.4); 
\coordinate (S1s) at (0.7,0); 
\coordinate (Sq1) at (-3.5,1.2); 
\coordinate (Bq2) at (-3,-1.5); 
\draw (Bq2) node[left]{$B_{p}^2$};
\draw[->] (Bq2) .. controls (-1,-1.5) and (-.6,-.7)..(Cm);
\draw[color=blue] (Bp2) node{$B_{q}^2$};
\draw (Sq1) node{$\partial D_{p}^2=S_{p}^1$};
\draw[color=blue] (Sp2) node[right]{$\partial D_{q}^3=S_{q}^2$};
\draw[color=blue] (Dp3) node{$D_{q}^3$};
\draw (Dq2) node{$D_{p}^2$};
\draw[color=red] (S1s) node{$S^1_\ast$};
\draw[thick,->] (1.45,-1.3)--(1.2,-.8);
\draw (1.4,-.5) node{$\tilde b$};
\end{tikzpicture}
\caption{Two embedded disks $D_{p}^2, {\color{blue}D_{q}^3}\subset M$ with $D_{p}^2\cap {\color{blue}D_{q}^3}=\color{red}S^1_\ast\coprod J$. In this figure, $D_{p}^2$ is flat and ${\color{blue}D_{q}^3}$ is drawn as a boundary connected sum of two $3$-disks along a path given by $\sigma$. $B_p^2\subset D_p^2$ is a $2$-disk with $\partial B_p^2=\color{red}S^1_\ast$. $\tilde b$ is the normal vector field of $S_p^1$ pointing inward into $D_p^2$.}
\label{Fig: two disks}
\end{center}
\end{figure}
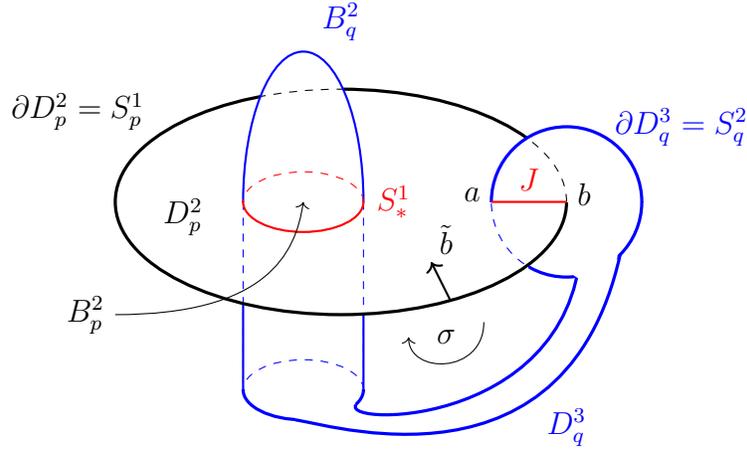

\begin{rem}
We choose a path from $b$ to the base point of $M$ so that the loop at $b$ represents the element $\sigma\in\pi_1M$. We also fix an orientation of the normal bundle of $D_p^2$ in $M$ (giving a normal orientation of $S_p^1$ in $M$) and an orientation of $D_q^3$ so that these orientations agree at $D_{q}^3\cap S_{p}^1=\{b\}$. These choices will give a marking on the lens-shaped model we are constructing (Definition \ref{def: lens-shaped models}.)
\end{rem}

We shall now construct a 1-parameter family of functions $(f_t)$ on $M^4\times I$ assuming we are given embedded disks $D_{q}^3,D_{p}^2\subset M$ as described above and shown in Figure \ref{Fig: two disks}. Start with $f_0=\pi_I$, the projection to $I=[0,1]$. To show that the result is a nontrivial element of $\pi_0\cC(M)$ we need to assume that $w=1$.


\begin{minipage}{0.5\textwidth}
We introduce a pair of cancelling Morse critical points $p_t,q_t$ of indices $2,3$ for $t>t_0$ where $f_{t_0}$ has a birth-death point of index $2$. Choose a path from this birth-death point to the basepoint of $M\times I^2$.
\end{minipage}
\hfill
\begin{minipage}{0.5\textwidth}
\begin{center}
\begin{tikzpicture}[scale=.5]
\draw (-3,-2)--(-3,2)--(3.2,2)--(3.2,-2)--cycle;
\coordinate (A) at (-1,0);
\coordinate (B) at (2,-1);
\coordinate (B0) at (.5,-1);
\coordinate (C) at (2,1);
\coordinate (C0) at (.5,1);
\coordinate (D1) at (2,-2.2);
\coordinate (D2) at (2,-1.8);
\coordinate (D0) at (2,-2.7);
\coordinate (X1) at (-1,-2.2);
\coordinate (X2) at (-1,-1.8);
\coordinate (X0) at (-1,-2.7);

	\draw[ thick] (A)..controls (0,0) and (1,-1)..(B) node[right]{$p_t$} (B0) node{\tiny 2};
	\draw[ thick] (A)..controls (0,0) and (1,1)..(C) node[right]{$q_t$} (C0) node{\tiny 3};
	\draw[thick] (D2)--(D1) (D0)node{$t$};
	\draw[thick] (X2)--(X1) (X0)node{$t_0$};
\end{tikzpicture}
\end{center}
\end{minipage}%


\begin{minipage}{0.4\textwidth}
\begin{center}
\begin{tikzpicture}[scale=.5]
\draw (-4.5,-1)--(-3.3,1)--(4,1)--(3,-1)--cycle;
\begin{scope}[yshift=35mm]
\draw (-4.5,-1)--(-3.3,1)--(4,1)--(3,-1)--cycle;
\end{scope}
\coordinate (M) at (4.3,2.5);
\draw (M) node{$M\times 1$};
\coordinate (V) at (4,-.6);
\coordinate (q) at (-.8,2);
\coordinate (p) at (.8,-2);
\coordinate (S2p) at (.8,0);
\coordinate (S2pL) at (2.7,.4);
\coordinate (SL) at (-.8,0);
\coordinate (SL2) at (-3,-.4);
\coordinate (S1) at (-.8,3.5);
\coordinate (S1L) at (-3,3.5);
\draw[blue,thick] (SL) ellipse[x radius=15mm, y radius=3.5mm];
\begin{scope}
\clip (-2.5,2.5) rectangle (1,2);
\draw (S1) ellipse[x radius=15mm, y radius=15mm];
\end{scope}
\draw[fill,gray!30!white] (S1) ellipse[x radius=15mm, y radius=3.5mm];
\draw[thick] (S1) ellipse[x radius=15mm, y radius=3.5mm] (S1L) node{$S_p^1$};
\draw[thick] (S2p) ellipse[x radius=15mm, y radius=3.5mm];
\begin{scope}
\clip (-2.5,0) rectangle (1,2);
\draw[fill, white] (SL) ellipse[x radius=15mm, y radius=20mm];
\draw[blue] (SL) ellipse[x radius=15mm, y radius=20mm];
\end{scope}
\draw[blue] (SL) ellipse[x radius=15mm, y radius=3.5mm];
\draw[fill] (q) circle[radius=1mm] node[below]{$q_t$};
\draw (V) node{$V$};
\draw[blue] (SL2) node{$S_L^2$};
\draw (S2p) ellipse[x radius=15mm, y radius=3.5mm];
\draw (S2pL) node{$S_p^2$};
\begin{scope}
\clip (-2,-1) rectangle (3,-2);
\draw (.8,0) ellipse[x radius=15mm, y radius=20mm];
\end{scope}

\draw[fill] (p) circle[radius=1mm] node[above]{$p_t$};

\end{tikzpicture}
\end{center}
\end{minipage}%
\hfill
\begin{minipage}{0.6\textwidth}
For $t$ slightly greater than $t_0$, the regular levels of the Morse function $f_t$ start at the top: $M^4=M\times1$ as shown in Figure \ref{Fig: two disks}, with an embedded $2$-disk $D_{p}^2\subset M^4$ (shaded on the left), performs surgery on $\partial D_{p}^2=S_{p}^1$ at $q_t$ to produce the intermediate level surface $V^4$ shown in Figure \ref{Fig: V} with an embedded sphere $S_L^2$ which is the stable sphere of $q_t$. $D_p^2\subset M$ descends to $S_p^2\subset V$, the unstable sphere of $p_t$.
\end{minipage}

\begin{figure}[htbp]
\begin{center}
\begin{tikzpicture}
\coordinate (C) at (0,0);
\coordinate (Cm) at (-1,0);
\coordinate (A) at (-4,0);
\coordinate (Ap) at (-4,.2);
\coordinate (B) at (-2.5,0);
\coordinate (Bp) at (-2.5,0.3);
\coordinate (AB) at (-3.4,0);
	\draw[very thick] (-2.5,0) .. controls (-1.5,0) and (-1,-2)..(0,-2);
	\draw[very thick] (1.6,-.5) .. controls (1.6,-1.5) and (1,-2)..(0,-2);
\draw[very thick] (2.8,-.5)..controls (2.8,-1.5) and (2,-3)..(-1,-3);
%
\begin{scope}
	\draw[fill, color=blue!15!white] (C) ellipse[x radius=4cm, y radius =1.5cm];
	\draw[very thick, color=blue] (C) ellipse[x radius=4cm, y radius =1.5cm];
	\draw[thick, color=red] (A)--(B) (AB) node[above]{$\widetilde J$};
	\draw[fill, color=white] (Cm) ellipse[x radius=1.5cm, y radius =.6cm];
	\draw[very thick, color=blue] (Cm) ellipse[x radius=1.5cm, y radius =.6cm];
	\draw (Ap)node[left]{$a$};
	\draw (Bp)node[left]{$\tilde b$};
\end{scope}
\begin{scope} 
\coordinate (D3p) at (0,1);
\coordinate (Sp2) at (-3,1.5);
\coordinate (Sb2) at (-1.2,.3);
\draw[color=blue] (D3p) node{$\widetilde D_{q}^3$};
\draw[color=blue] (Sp2) node{$ S_{q}^2$};
\draw[color=blue] (Sb2) node{$ S_L^2$};
\end{scope}
\coordinate (S) at (2.2,-.5);
\coordinate (Sm) at (1.5,-.5);
\coordinate (Spp) at (3,2);
\begin{scope} 
\draw[color=blue!7!white,fill] (S) ellipse[x radius=6mm,y radius=2mm];
\draw[color=red,thick] (S) ellipse[x radius=6mm,y radius=2mm]; 
\clip (Sm) rectangle (Spp);
\draw[fill,color=blue!7!white] (S) ellipse [x radius=6mm,y radius=8mm];
\draw[very thick] (S) ellipse [x radius=6mm,y radius=8mm];
\draw[color=red,dashed] (S) ellipse[x radius=6mm,y radius=2mm];
\end{scope}
\begin{scope} 
\coordinate (Am) at (-6,0);
\coordinate (Lm) at (-4,-3);
\coordinate (L) at (-1,-3);
\coordinate (Br) at (-1.5,0);
\coordinate (Mm) at (0,-2);
\coordinate (M) at (1,-2);
\coordinate (Mr) at (2,-2);
\coordinate (Mpp) at (-1,-.5);
\coordinate (Mp) at (-1,-1.5);
\draw[very thick] (A)..controls (Am) and (Lm)..(L);
\end{scope}
	\begin{scope} 
	\clip (Cm) ellipse[x radius=1.5cm, y radius =.6cm];
	\draw[very thick] (-2.5,0) .. controls (-1.5,0) and (-1,-2)..(0,-2);
	\end{scope}
	\draw[dashed] (-2.5,0) .. controls (-1.5,0) and (-1,-2)..(0,-2) (1.6,-.5) .. controls (1.6,-1.5) and (1,-2)..(0,-2)  (2.8,-.5)..controls (2.8,-1.5) and (2,-3)..(-1,-3);
	\draw (-2,-2.5) node{$ S_{p}^2$};
	\draw (2.5,.5) node{$ B_{p}^2$};
	\draw[color=red] (3.1,-.45) node{$ S_\ast^1$};
	\draw[color=blue] (3.1,-2) node[right]{$ B_{q}^2$};
	\draw[color=blue,->] (3.1,-2)..controls (2.6,-2) and (2.2,-1)..(2.2,-.5) ;
%
%
%
\end{tikzpicture}
\caption{The level surface $V^4$ obtained by doing surgery on $S_{p}^1\subset M$. The disk $D_{p}^2\subset M$ becomes the sphere $ S_{p}^2\subset V$ which meets $S_L^2\subset V$, the sphere dual to $S_{p}^1$, transversely at $\tilde b$. In $V$, the disk ${\color{blue}D_{q}^3}\subset M$ becomes ${\color{blue}\widetilde D_{q}^3=S^2\times \widetilde J}$ with boundary $\color{blue}\partial\widetilde D_{q}^3=S_L^2\coprod S_{q}^2$. In this figure, $\color{blue}\widetilde D_{q}^3$ is flat.}
\label{Fig: V}
\end{center}
\end{figure}
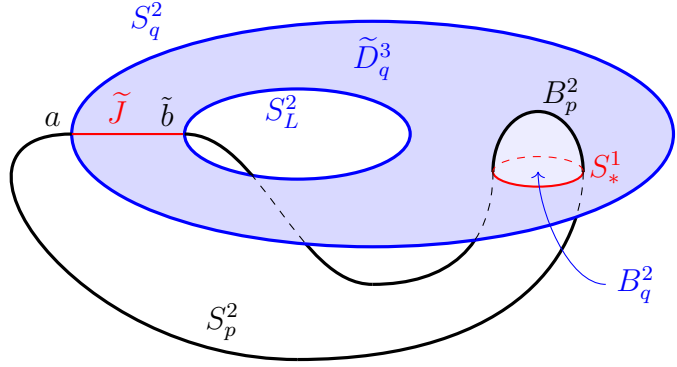

Thus $S_L^2$ is the stable sphere of $q_t$, the set of all points in $V^4$ which are carried by trajectories of $\nabla f_t$ to $q_t$. Points on the surgery circle $S_{p}^1=\partial D_{p}^2$ give normal vector fields for $S_L^2$ in $V^4$ and, in the direction corresponding to the point $b=\partial D_{p}^2\cap D_{q}^3$, a small $2$-sphere in $D_q^3$ centered $b$ in $M$, under the flow generated by $-\nabla f_t$ goes to a sphere $\tilde S_L^2\subset V^4$ parallel to $S_L^2$.

Let $S_{p}^2\subset V^4$ be the unstable sphere of $p_t$. Since $p_t,q_t$ are a cancelling pair of critical points, the two spheres $S_L^2,S_{p}^2$ intersect transversely at one point. We can take $S_{p}^2$ to be the image of $D_{p}^2$ in $V^4$.

\begin{minipage}{0.5\textwidth}
For $t_1\le t\le t_2$ we do an isotopy of the descending sphere of the critical point $q_t$ from $S_L^2$ to $S_{q}^2$ along the cylinder $\widetilde D_{q}^3=S^2\times \widetilde J$. This intersects $S_{p}^2$ along the circle $S_\ast^1$.
\end{minipage}
\hfill
\begin{minipage}{0.5\textwidth}
\begin{center}
\begin{tikzpicture}[scale=.5]
\draw (-3,-2.2)--(-3,2.1)--(4.2,2.1)--(4.2,-2.2)--cycle;
\coordinate (A) at (-1.5,0);
\coordinate (B) at (3,-1);
\coordinate (B0) at (.5,-1);
\coordinate (C) at (3,1);
\coordinate (C0) at (.5,1);

	\draw[ thick] (A)..controls (-1,0) and (1,-1.2)..(B);
	\draw[ thick] (A)..controls (-1,0) and (1,1.2)..(C) ;
	\draw[color=red] (-1.3,0)..controls (-.3,0) and (1.5,-.7) .. (3,-.4) (1.8,.2) ellipse[x radius=5mm,y radius=2mm];
	\begin{scope}[yshift=-2mm]
\coordinate (D1) at (3,-2.2);
\coordinate (D2) at (3,-1.8);
\coordinate (D0) at (3,-2.7);
\coordinate (X0) at (0.2,-2.7);
\coordinate (X1) at (0.2,-2.2);
\coordinate (X2) at (0.2,-1.8);
	\draw[thick] (D2)--(D1) (D0)node{$t_2$};
	\draw[thick] (X2)--(X1) (X0)node{$t_1$};
	\end{scope}
\end{tikzpicture}
\end{center}
\end{minipage}%

By construction, any normal framing of $\widetilde D_q^3$ in $V\times I$ gives a trivial framing for  $S_\ast^1$ in $S_{p}^2\times I$ and, by Condition (6), for $n$ odd, any normal framing of $S_p^2\times I$ gives a nontrivial framing of $S_\ast^1$ in $\widetilde D_{q}^3$, the trace of the deformation of $S_L^2$ to $S_{q}^2$. By (5), the trajectories of $-\nabla f_t$ through $S_\ast^1$ gives a $2$-sphere homotopic to $S_0^2$. Also, each of these trajectories, together with the paths $p_t$, $q_t$ to the birth-death point of $f_{t_0}$ followed by the chosen path to the basepoint of $M\times I^2$ gives the chosen element $\sigma\in \pi_1M$.

When $n=1$ and $M=X\# \CC P^2$, $M$ contains a subspace $Y=X-D^4$ with $\pi_1Y=\pi_1M$ and $\CC P^1$ is disjoint from $Y$. So, the projection of the first Postnikov invariant $k_1\in H^3(\pi_1M;\pi_2M)$ to $H^3(\pi_1M;\ZZ)=H^3(\pi_1M;\pi_2\CC P^2)$ is trivial. Thus, in the exact sequence:
\[
	K_3(\ZZ[\pi_1M])\xrightarrow \chi Wh_1^+(\pi_1;\ZZ_2\oplus \pi_2M)\to \pi_1\cP (M)\to Wh_2(\pi_1M)\to 0
\]
the boundary map $\chi$ does not hit the element $\tau[\sigma]\in Wh_1^+(\pi_1M;\pi_2M)$ where $\tau\in\pi_2M$ is the class of $\CC P^1$ since, by \cite{What happens}, this factor of $\chi$ is given by:
\[
	K_3(\ZZ[\pi_1M])\to H_3(\pi_1M;\pi_2M)\xrightarrow{\cap k_1} H_0(\pi_1M;\pi_2M)\to Wh_1^+(\pi_1M;\pi_2M)
\]
which does not hit the summand $Wh_1^+(\pi_1M;\pi_2M)$ generated by $\tau[\pi_1M]$.

\begin{minipage}{0.5\textwidth}
At $t_2$ we can cancel the critical points at a birth-death point to give a marked lens-shaped model with invariant $\tau[\sigma]\in Wh_1^+(\pi_1M;\pi_2M)$ where $\tau\in \pi_2M$ is the homotopy class of $\CC P^1$.
\end{minipage}
\hfill
\begin{minipage}{0.5\textwidth}
\begin{center}
\begin{tikzpicture}[scale=.5]
\draw (-4,-2.2)--(-4,2.1)--(4,2.1)--(4,-2.2)--cycle;
\coordinate (A) at (-2.5,0);
\coordinate (B) at (2,-1);
\coordinate (B0) at (.5,-1);
\coordinate (C) at (2,1);
\coordinate (C0) at (.5,1);

	\draw[ thick] (-2.5,0)..controls (-2,0) and (-1,-1)..(0,-1) ..controls (1,-1) and (2,0).. (2.5,0);
	\draw[ thick] (-2.5,0)..controls (-2,0) and (-1,1)..(0,1) ..controls (1,1) and (2,0).. (2.5,0);
	\draw[color=red] (-2.3,0)..controls (-2,0) and (-1,-.4) .. (0,-.4).. controls (1,-.4) and (2,0)..(2.3,0) (0,.2) ellipse[x radius=5mm,y radius=2mm];
	\begin{scope}[yshift=-2mm]
\coordinate (D1) at (2.4,-2.2);
\coordinate (D2) at (2.4,-1.8);
\coordinate (D0) at (2.4,-2.7);
	\draw[thick] (D2)--(D1) (D0)node{$t_2$};
	\end{scope}
\end{tikzpicture}
\end{center}
\end{minipage}%

More generally, this same construction proves the following.

\begin{thm}[Theorem \ref{thm A}]\label{prop: realizing Wh1pi2}
Given a compact $4$-manifold $M$ and $\tau\in\pi_2M$ represented by an embedded $2$-sphere and $\sigma\in \pi_1M$, the obstruction element $\tau[\sigma]\in Wh_1^+(\pi_1M; \pi_2M)$ lifts to $\cC (M)$. Furthermore, when the embedded $2$-sphere has self-intersection number $\pm1$, this element of $\pi_0\cC(M)$ is stably nontrivial (survives to a nontrivial element of $\pi_0\cP(M)$).
\end{thm}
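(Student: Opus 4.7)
The plan is to execute the explicit construction of a marked lens-shaped model $(f_t)\in\cD_0(M)$ that has been sketched just before the theorem statement, and then read off its framed bordism invariant using Proposition \ref{prop: integer framed bordism invariant}. First I would invoke Lemma \ref{lem: existence of two disks} for the embedded $2$-sphere $S_0^2$ representing $\tau$ (with self-intersection number $w$) and the element $\sigma\in\pi_1M$ to obtain disks $D_q^3,D_p^2\subset M$ with the listed intersection data ($a$, $b$, $J$, $S_\ast^1$, $B_q^2$, $B_p^2$). I would also fix a marking: a path from the basepoint to the eventual birth-death point and an orientation of the negative eigenspace of $D^2f_{t_0}$ there, compatible with condition (4) of Lemma \ref{lem: existence of two disks}.

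Next I would assemble the family $(f_t)$ exactly as described: start from $f_0=\pi_I$, introduce a cancelling index $(2,3)$ pair $p_t,q_t$ at $t_0$ so that the intermediate level $V=f_t^{-1}(\tfrac12)$ is obtained from $M$ by surgery on $S_p^1=\partial D_p^2$, then isotope the stable sphere of $q_t$ through $\widetilde D_q^3\cong S^2\times\widetilde J$ from $S_L^2$ to $S_q^2=\partial D_q^3$ on $t\in[t_1,t_2]$, and cancel the pair at a second birth-death point at $t_2$. This yields a marked lens-shaped model $(f_t)\in\cD_0(M)$, and its image $\widetilde\theta_0(f_t)\in\pi_0\cC(M)$ under the homomorphism of Lemma \ref{lem: D0(M) is a group} is the candidate lift.

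The third step is to compute $\widetilde\lambda_0(f_t)\in(\ZZ\oplus\pi_2M)[\pi_1M]/(\ZZ\oplus\pi_2M)[e]$ via Definition \ref{def: framed bordism invariant for D0(M)}. The fiberwise intersection of the descending sphere of $q_t$ with the ascending sphere of $p_t$ in $V\times(t_0,t_2)$ splits as one open arc (joining the two birth-death points, which by construction comes from the single transverse point of $S_L^2\cap S_p^2$) together with a single closed component, namely $S_\ast^1$. Conditions (4)--(5) of Lemma \ref{lem: existence of two disks} identify the map $S_\ast^1\to\Omega M$: its $\pi_1M$-component is $\sigma$ and its $\pi_2M$-class is $[B_q^2\cup B_p^2]=\tau$; condition (6) identifies its integer framing number with $w$. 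Therefore, modulo $(\ZZ\oplus\pi_2M)[e]$,
\[
\widetilde\lambda_0(f_t)=(w,\tau)[\sigma],
\]
and projecting to the $\pi_2M$-factor of $Wh_1^+(\pi_1M;\ZZ_2\oplus\pi_2M)$ gives $\tau[\sigma]$, proving the lifting claim.

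For the stable nontriviality when $w=\pm1$, note that the self-intersection assumption forces $M=X\#\CC P^2$ with $\tau$ the class of $\CC P^1$. I would apply the extended Hatcher--Wagoner sequence \eqref{extended HW exact sequence}; it suffices to show $\tau[\sigma]\notin\im\chi$. Since $Y=X\setminus D^4\subset M$ satisfies $\pi_1Y=\pi_1M$ and is disjoint from $\CC P^1$, the projection of the first Postnikov invariant $k_1\in H^3(\pi_1M;\pi_2M)$ onto $H^3(\pi_1M;\ZZ\tau)=H^3(\pi_1M;\pi_2\CC P^2)$ vanishes. By the formula for $\chi$ of \cite{What happens} (cap product with $k_1$ followed by the natural map to $Wh_1^+$), the $\tau$-summand of $Wh_1^+(\pi_1M;\pi_2M)$ lies outside $\im\chi$, so $\theta(\tau[\sigma])\ne0$ in $\pi_0\cP(M)$. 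The hard step is the third: keeping the marking coherent throughout the deformation so that $S_\ast^1$ really represents $\sigma$ (not a conjugate of it, up to the ambiguity already present in $Wh_1^+$), so that the $\pi_2M$ contribution is $\tau$ rather than a sign-twisted class, and so that the integer framing obtained from Proposition \ref{prop: integer framed bordism invariant} matches the winding number $w$ supplied by condition (6).
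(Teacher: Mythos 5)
Your proposal is correct and follows essentially the same route as the paper: invoke Lemma \ref{lem: existence of two disks} to produce $D_p^2,D_q^3$, build the marked lens-shaped model by introducing the cancelling index-$(2,3)$ pair and isotoping the descending sphere through $\widetilde D_q^3$, read off $\widetilde\lambda_0(f_t)=(w,\tau)[\sigma]$ from the single closed component $S_\ast^1$ and conditions (4)--(6), and then use the Postnikov-invariant/$\chi$-formula argument from \cite{What happens} to get stable nontriviality when $w=\pm1$. The only minor point worth noting is that the paper leaves the verification of the marking coherence (your ``hard step'') largely implicit as part of the conventions set out in the remark following Lemma \ref{lem: existence of two disks}, whereas you flag it explicitly; both treatments are compatible.
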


\begin{rem}\label{rem: not necessarily nonzero}
The fact that the obstruction element $\tau[\sigma]$ lifts to an element of $\pi_0\cC(M)$ does not imply that its image in $\pi_0\cC(M)$ is nonzero except in the case when the self-intersection number of $\tau$ is $n=\pm1$ since the Postnikov invariant argument given above only works in that case.
\end{rem}

\section{Realizing the framing obstruction}

We use the construction from the previous section to construct elements realizing every element of the framing obstruction group
\[
	Wh_1^+(\pi_1M;\ZZ_2).
\]
However, the comment in Remark \ref{rem: not necessarily nonzero} still applies. Lifting these obstruction elements to $\pi_0\cC(M)$ does not imply that their images are nonzero, except in the following case.

Suppose that $\pi_1M$ is free abelian of finite nonzero rank. Then, as observed by Hatcher, $Wh_1^+(\pi_1M;\ZZ_2)$ will be an infinitely generated group. But $K_3(\ZZ[\pi_1M])$ is finitely generated by the fundamental theorem of K-theory. From the exact sequence
\[
	K_3(\ZZ[\pi_1M])\xrightarrow\chi Wh_1^+(\pi_1M;\ZZ_2)\oplus Wh_1^+(\pi_1M;\pi_2M)\to \pi_0\cP(M)
\]
we can then conclude that there are infinitely many nontrivial elements of $\pi_0\cC(M)$ coming from $Wh_1^+(\pi_1M;\ZZ_2)$. However, these elements might not be $2$-torsion when $M$ is $4$-dimensional.

For nontrivial $\sigma\in\pi_1M$ and $\tau\in\pi_2M$ represented by an embedded $2$-sphere in $M$ with self-intersection number $\pm1$ we realize the obstruction $\tau[\sigma]$ as in Theorem \ref{prop: realizing Wh1pi2} and do it again for $\tau[\sigma^{-1}]$ where we choose markings for our lens-shaped models (lifting them to elements of $\cD_0(M)$) to make these obstructions well-defined (Definition \ref{def: lens-shaped models}). However, the second time, we apply the involution $\varepsilon$ which turns the Morse functions upside down to make the intersection circle $S_\ast^1$ trivially framed in $\widetilde D_{q}^2$ and nontrivially framed in $S_{p}^2\times I$. As in the proof of Proposition \ref{prop: involution on Wh1+ invariant}, the product of these two elements of $\cD_0(M)$ will map to zero in $\pi_2M[\pi_1M]/\pi_2M[e]$ since $\tau[\sigma]-\tau[\sigma]=0$. The integer framed bordism invariant (Proposition \ref{prop: integer framed bordism invariant}) will be $\pm[\sigma]$, a generator of $\ZZ[\pi_1M]/\ZZ[e]$.  This gives:

{
\begin{thm}[Theorem \ref{thm B}]\label{thm: realizing Wh1+Z2}
Let $M=X\#\CC P^2$ be the connected sum of a nonsimply connected $4$-manifold $X$ with $\CC P^2$. Then the integer framed bordism invariant $\widetilde\lambda_0:\pi_0\cD_0(M)\to \ZZ[\pi_1M]/\ZZ[e]$ is surjective. In particular, every element in the image of the second obstruction map $\theta:Wh_1^+(\pi_1M;\ZZ_2)\to \pi_0\cP(M)$ lifts to $\pi_0\cC(M)$.
\end{thm}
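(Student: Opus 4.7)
My plan is to execute the outline sketched in the paragraphs immediately preceding the theorem. For each nontrivial $\sigma\in\pi_1M$, I will build a marked lens-shaped model $h_\sigma\in\pi_0\cD_0(M)$ whose integer framing invariant projects to the generator $[\sigma]\in\ZZ[\pi_1M]/\ZZ[e]$. Because $\pi_0\cD_0(M)$ is a group (Lemma \ref{lem: D0(M) is a group}) and $\widetilde\lambda_0$ is a homomorphism (disjoint unions of closed components add), taking integer powers of the $h_\sigma$ and summing across $\sigma$ then forces surjectivity of $\widetilde\lambda_0$.

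The element $h_\sigma$ will be the composition $f^+\ast\varepsilon(f^-)$ of two models built from the two-disk construction of Section 2. The first, $f^+$, comes from applying Lemma~\ref{lem: existence of two disks} to the embedded $\CC P^1\subset M$ (self-intersection $w=1$) and the chosen $\sigma$, then running the pair-creation/isotopy/pair-cancellation procedure. Reading off Definition~\ref{def: framed bordism invariant for D0(M)}, the closed component of $J$ for $f^+$ is the circle $S_*^1$, which by Conditions (4)--(6) represents $\tau=[\CC P^1]\in\pi_2M$, winds around $\sigma$, and carries integer framing $+1$; thus $\widetilde\lambda_0(f^+)=(1,\tau)[\sigma]$. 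Repeating with $\sigma^{-1}$ in place of $\sigma$ produces $f^-$ with $\widetilde\lambda_0(f^-)=(1,\tau)[\sigma^{-1}]$.

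Next I apply the involution $\varepsilon$ of Subsection~\ref{ss: involution} to $f^-$. The proof of Proposition~\ref{prop: involution on Wh1+ invariant} shows that $\varepsilon$ sends $\tau\mapsto -\tau$ and $\sigma^{-1}\mapsto\sigma$, while the integer framing numbers $w,w'$ of $f^-$ and $\varepsilon(f^-)$ compare the normal bundles of the two hemispheres $D_+^2,D_-^2$, so satisfy $w+w'=\tau\cdot\tau=1$. Hence $\widetilde\lambda_0(\varepsilon(f^-))=(0,-\tau)[\sigma]$, and additivity of $\widetilde\lambda_0$ under the $\cD_0(M)$-product gives
\[
\widetilde\lambda_0(h_\sigma)=(1,\tau)[\sigma]+(0,-\tau)[\sigma]=(1,0)[\sigma],
\]
which projects to the generator $[\sigma]\in\ZZ[\pi_1M]/\ZZ[e]$. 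Varying $\sigma$ and using the group structure of $\pi_0\cD_0(M)$ then realizes all of $\ZZ[\pi_1M]/\ZZ[e]$, proving surjectivity of $\widetilde\lambda_0$. The ``in particular'' clause follows by applying $\widetilde\theta_0$ (a group homomorphism, again by Lemma~\ref{lem: D0(M) is a group}) and inspecting the commutative diagram displayed in the Introduction: the image of each generator $[\sigma]$ of $Wh_1^+(\pi_1M;\ZZ_2)$ in $\pi_0\cP(M)$ is hit by $\widetilde\theta_0(h_\sigma)\in\pi_0\cC(M)$.

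The main obstacle is the sharp integer identity $w+w'=\tau\cdot\tau=1$. Proposition~\ref{prop: involution on Wh1+ invariant} is stated only modulo $2$, so I need to revisit its proof to extract the sharper integer equality $w+w'=\deg(\nu_{D_+^2\cup D_-^2})$ and to identify this normal-bundle degree with the self-intersection of the embedded $\CC P^1$ via Lemma~\ref{lem: existence of two disks}(5). Verifying the sign conventions attached to the two markings, and their compatibility under the composition $f^+\ast\varepsilon(f^-)$ in $\pi_0\cD_0(M)$, is where the argument requires the most care; everything else is bookkeeping that assembles the pieces already developed in Sections 1 and 2.
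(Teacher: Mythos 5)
Your proposal follows the paper's own route exactly: build $f^+$ and $f^-$ from the two-disk construction of Section~2 applied to $\sigma$ and $\sigma^{-1}$, apply the involution $\varepsilon$ to the second, multiply in $\pi_0\cD_0(M)$ so that the $\pi_2M$-parts $\tau[\sigma]$ and $-\tau[\sigma]$ cancel while the integer framing survives, and then push the result through the commutative square relating $\widetilde\theta_0$, $\overline\lambda_0$ and $\theta$. Your explicit bookkeeping $\widetilde\lambda_0(h_\sigma)=(1,\tau)[\sigma]+(0,-\tau)[\sigma]=(1,0)[\sigma]$ makes precise the paper's terser assertion that the product has integer framed bordism invariant $\pm[\sigma]$.

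The one point you rightly flag---that Proposition~\ref{prop: involution on Wh1+ invariant} is stated over $\ZZ_2$ and you need the integer identity $w+w'=\tau\cdot\tau$---is indeed the step that must be extracted with care, but the geometric comparison in the proof of that Proposition (normal framing of $D_-^2$ versus tangential framing of $D_+^2$, and then the symmetric comparison for $\varepsilon$) is already integer-valued for $k=2$; the mod~$2$ reduction there is only because $\overline\lambda$ targets $\ZZ_2$. Combining this with Lemma~\ref{lem: existence of two disks}(6), which pins the winding number to the self-intersection $w=1$, gives the sharp identity $w+w'=1$ and closes your gap. So your proposal is correct and is the paper's argument.
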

}

More generally, an analogous construction gives the following.

\begin{thm}\label{thm: realizing Wh1+Z2 b}
Suppose $M^4$ contains an embedded $2$-sphere with odd self-intersection number. Then every element of the image of $\theta:Wh_1^+(\pi_1M;\ZZ_2)\to\pi_0\cP(M)$ lifts to $\pi_0\cC(M)$.
\end{thm}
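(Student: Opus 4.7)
The plan is to imitate the proof of Theorem \ref{thm: realizing Wh1+Z2} essentially verbatim, replacing $\CC P^1 \subset \CC P^2$ with the hypothesized embedded $2$-sphere $S_0^2 \subset M$ of odd self-intersection number $w$. The key new observation is that although $w$ may now be any odd integer (not just $\pm 1$), only its parity matters once we project to $Wh_1^+(\pi_1 M;\ZZ_2)$, and this parity conspires with $w_2(\tau)$ to give exactly the wanted generator.

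Fix a nontrivial $\sigma \in \pi_1 M$ and let $\tau = [S_0^2] \in \pi_2 M$. Lemma \ref{lem: existence of two disks} applied to $(S_0^2,\sigma)$ furnishes disks $D_q^3, D_p^2 \subset M$ with the intersection pattern of Figure \ref{Fig: two disks}, and the construction of Section 2 then produces a marked lens-shaped model $f_t^a \in \cD_0(M)$. The closed intersection component $S_\ast^1$ bounds a $2$-sphere homotopic to $S_0^2$ by property (5), its linking loop represents $\sigma$ in $\pi_1 M$, and it carries integer framing $w$ by property (6); hence $\widetilde\lambda_0(f_t^a) = (w,\tau)[\sigma]$. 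Running the construction with $\sigma^{-1}$ in place of $\sigma$ yields $f_t^b \in \cD_0(M)$ with $\widetilde\lambda_0(f_t^b) = (w,\tau)[\sigma^{-1}]$.

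Now form the product $f_t^a \ast \varepsilon(f_t^b)$ in the group $\pi_0\cD_0(M)$, where $\varepsilon$ is the involution of Section \ref{ss: involution}. Additivity of $\overline\lambda_0$ under stacking, combined with Proposition \ref{prop: involution on Wh1+ invariant} applied to $f_t^b$, gives
\[
\overline\lambda_0\bigl(f_t^a \ast \varepsilon(f_t^b)\bigr) = (w,\tau)[\sigma] + \bigl(w + w_2(\tau),\,-\tau\bigr)[\sigma] = \bigl(2w + w_2(\tau),\,0\bigr)[\sigma]
\]
in $Wh_1^+(\pi_1 M;\ZZ_2\oplus\pi_2 M)$. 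The first coordinate is taken mod $2$, so $2w \equiv 0$; and since $\tau$ is represented by an embedded $2$-sphere whose normal bundle has odd Euler number, $w_2(\tau) = 1$. Thus the invariant equals $[\sigma] \in Wh_1^+(\pi_1 M;\ZZ_2)$. Applying $\widetilde\theta_0$ (Lemma \ref{lem: D0(M) is a group}) produces an element of $\pi_0 \cC(M)$ whose image in $\pi_0\cP(M)$ is $\theta([\sigma])$. As $\sigma$ runs over nonidentity conjugacy classes of $\pi_1 M$, the classes $[\sigma]$ generate $Wh_1^+(\pi_1 M;\ZZ_2)$, so by additivity every element in the image of $\theta$ restricted to $Wh_1^+(\pi_1 M;\ZZ_2)$ lifts to $\pi_0\cC(M)$.

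The main thing to check carefully is that property (6) of Lemma \ref{lem: existence of two disks} really translates into the integer framing number $w$ for $S_\ast^1 \subset S_q^2 \times (t_0,t_1)$ in the sense of Proposition \ref{prop: integer framed bordism invariant}, with the sign convention compatible with Proposition \ref{prop: involution on Wh1+ invariant}. Once that framing bookkeeping is made explicit, the mod-$2$ cancellation $2w + w_2(\tau) \equiv w_2(\tau) \equiv 1$ is automatic from the odd self-intersection hypothesis, and the rest of the argument is identical to that for Theorem \ref{thm: realizing Wh1+Z2}.
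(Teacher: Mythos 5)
Your proof is correct and follows the paper's intended approach: the paper proves the $w=\pm 1$ case (Theorem \ref{thm: realizing Wh1+Z2}) and states only that ``an analogous construction'' gives Theorem \ref{thm: realizing Wh1+Z2 b}, and you have supplied precisely that construction, with the key observation that in the mod-$2$ reduction the integer framings $2w$ cancel while $w_2(\tau)=w\bmod 2=1$ survives.
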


\section{Construction of the two disks}

We prove Lemma \ref{lem: existence of two disks} by constructing the two disks $D_{p}^2,D_{q}^3$ first in the special case when the embedded $2$-sphere is $\CC P^1$, then more generally for any embedded $2$-sphere by modifying the construction in the special case. 

The disk $D_{q}^3$ will be the boundary connected sum of two $3$-disks $D_0^3$ and $D_1^3$ where $D_{p}^2,D_0^3, D_1^3$ will all be contained in a punctured $\CC P^2$ contained in $M$.

Let $D_{p}^2$ be the set of all point in $\CC P^2$ with coordinates $[1,z,0]$ where $|z|\le 3$ and let $B_{p}^2$ be the subset of those points $[1,z,0]$ with $|z|\le1$. These are $2$-disks with $\partial B_{p}^2=S_\ast^1$ being the set of all $[1,x,0]$ where $|x|=1$.

Let $S_\ast^1\times D^2$ denote the set of all points in $\CC P^2$ of the form $[1,x,z]$ where $|x|=1$ and $|z|\le 1$. This is the restriction to $S_\ast^1$ of the normal disk bundle of $D_{p}^2$ in $\CC P^2$. Thus $S_\ast^1\times D^2$ meets $D_{p}^2$ transversely along $S_\ast^1$ as shown in Figure \ref{Fig: construction of two disks}.

\begin{figure}[htbp]
\begin{center}
\begin{tikzpicture}
\coordinate (C) at (0,0); %
\coordinate (Cm) at (-.5,0); %
\coordinate (A) at (2,0); %
\coordinate (Am) at (1.9,0); %
\coordinate (Ap) at (2.5,0); %
\coordinate (X) at (4.1,1.1); %
\coordinate (Y) at (4.1,-1.1); %
\coordinate (B) at (3,0); %
\coordinate (J) at (2.5,0); %
	\draw[fill, color=blue!10!white] (B) circle [radius=1cm];

\begin{scope} 
	\clip (Ap) rectangle (Y);
	\draw[very thick, color=blue] (B) circle [radius=1cm];
\end{scope}
\draw[very thick] (C) ellipse [x radius=3cm,y radius=1.6cm]; 

\begin{scope} 
	\clip (Am) rectangle (X);
	\draw[thick, color=white, fill] (B) circle [radius=1cm];
	\draw[fill, color=blue!10!white] (B) circle [radius=1cm];
	\draw[very thick, color=blue] (B) circle [radius=1cm];
\end{scope}
\begin{scope}
	\clip (Am) rectangle (Y);
	\draw[fill,color=white] (C) ellipse [x radius=3cm,y radius=1.6cm];
	\draw[very thick] (C) ellipse [x radius=3cm,y radius=1.6cm];
	\clip (B)circle [radius=1cm];
	\clip (C) ellipse [x radius=3cm,y radius=1.6cm];
	\draw[thick,dashed, color=blue] (B) circle [radius=1cm];
	\draw[very thick] (C) ellipse [x radius=3cm,y radius=1.6cm];
\end{scope}
\begin{scope} 
	\draw[fill, color=blue!10!white] (C) ellipse [x radius=12mm,y radius=6mm];
	\draw[fill, color=white] (0,.3) ellipse [x radius=12mm,y radius=6mm];
\end{scope}
\begin{scope} 
	\clip (0,.3) ellipse [x radius=12mm,y radius=6mm];
	\draw[fill, color=blue!10!white] (0,.3) ellipse [x radius=12mm,y radius=6mm];
	\draw[fill, color=white] (C) ellipse [x radius=12mm,y radius=6mm];
\end{scope}
\begin{scope} 
\draw[dashed, color=red] (C) ellipse [x radius=12mm,y radius=6mm];
\clip (-2,0) rectangle (1.5,-1);
\draw[thick, color=red] (C) ellipse [x radius=12mm,y radius=6mm];
\end{scope}
\draw[very thick, color=blue] (0,.3) ellipse [x radius=12mm,y radius=6mm] (1.2,.3)--(1.2,0)(-1.2,.3)--(-1.2,0);
\draw[dashed, color=blue] (0,-.3) ellipse [x radius=12mm,y radius=6mm];
\draw[thin, color=blue] (1.2,-.3)--(1.2,0)(-1.2,-.3)--(-1.2,0);
\draw[thick, color=red] (A)--(B) (J)node[below]{$J$}; 
\coordinate (Sp2) at (3.5,1); %
\coordinate (Bq2) at (-3,-1.5); %
\draw (-2.7,-1.1) node[left]{$D_{p}^2$};
\draw[color=blue] (Sp2) node[right]{$ D_0^3$};
\draw[color=red] (-1.7,0) node{$S^1_\ast$};
\draw[color=blue] (0,1.2) node{$S_\ast^1\times D^2$};
\end{tikzpicture}
\caption{The $2$-disk $D_{p}^2\subset \CC P^1\subset \CC P^2$ with subdisk $B_{p}^2$. The boundary $\color{red}\partial B_{p}^2=S_\ast^1$ is in red. In blue are $\color{blue}S_\ast^1\times D^2$, meeting $D_{p}^2$ transversely along $\color{red}S_\ast^1$ and $\color{blue}D_0^3$ meeting $\partial D_{p}^2$ tranversely at one point with $\color{red}J=D_0^3\cap D_{p}^2$ in red.}
\label{Fig: construction of two disks}
\end{center}
\end{figure}
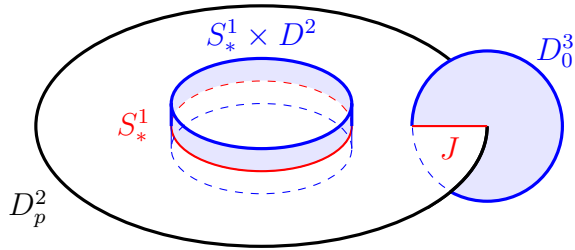

Let $D_0^3$ be the $3$-disk in $\CC P^2$ which meets $\partial D_{p}^2=S^1_{p}$ transversely at one point and so that $D_0^3\cap D_{p}^2$ is the line segment $J$ as shown in Figure \ref{Fig: construction of two disks}.

To construct $D_1^3$, we take the twisted circle $\widetilde S_\ast^1$ in $\partial(S_\ast^1\times D^2)$ given by 
\[
	\widetilde S_\ast^1=\{[1,x,x]\in \CC P^2\,|\, |x|=1\}.
\]
This bounds a $2$-disk in $\CC P^2$ in the complement of $\CC P^1$ by taking the set of all $[t,x,x]\in \CC P^2$ with $|x|=1$ and $t\in [0,1]$. This is a $2$-disk since $[t,x,x]=[t\overline x,1,1]$. The union of $B_{p}^2$ with this $2$-disk is a $2$-sphere homotopic to $\CC P^1$. Use this $2$-disk to attach a handle to $S_\ast^1\times D^2$ to obtain a $3$-disk $D_1^3$ which meets $D_{p}^2$ transversely along $S_\ast^1$.

Finally, let $D_{q}^3$ be given by taking the boundary connect sum of the two $3$-disks $D_1^3, D_0^3$ along a path in $M$ given by the chosen element $\sigma\in \pi_1M$. The result is as shown in Figure \ref{Fig: two disks} proving Lemma \ref{lem: existence of two disks} in the special case then $S_0^2=\CC P^1$.

\begin{proof}[Proof of Lemma \ref{lem: existence of two disks} in general] Let $S_0^2\subset M$ be an embedded $2$-sphere with self-intersection number $n$. We follow the same outline as above using Figure \ref{Fig: construction of two disks}. 

Let $D_{p}^2$ in Figure \ref{Fig: construction of two disks} be the southern hemisphere of $S_0^2$ with subdisk $B_{p}^2\subset D_{p}^2\subset S_0^2$ and $S_\ast^1=\partial B_{p}^2$. For the circle bundle $\partial S_\ast^1\times D^2\to S_\ast^1$ take the section $\widetilde S_\ast^1$ with winding number $n$. This is the section which extends to a section $B_{q}^2$ of the normal circle bundle over $S_0^2\backslash B_{p}^2$. We can use this section to attach a $3$-dimensional $2$-handle to $S_\ast^1\times D^2$. This gives a $3$-disk $D_1^3$ which intersects $D_{p}^2$ transversely along $S_\ast^1$ so that $B_{q}^2\cup B_{p}^2\subset D_1^3\cup D_{p}^2$ is homotopic to $S_0^2$. Let $D_{q}^3$ be the boundary connected sum of $D_1^3$ with a $3$-disk $D_0^3$ given as before using the path $\sigma$ to do the connected sum. This produces the disks $D_{p}^2$ and $D_{q}^3$ with all the required properties.
\end{proof}

\section*{Acknowledgements}

The author would like to thank Sander Kupers for encouraging him to write up these results and for commenting on earlier versions of this manuscript. He also thanks Bj\o rn Jahren and Tom Goodwillie for useful correspondences. The author is indebted to Allen Hatcher for explaining pseudoisotopy theory to him and for posing the question of what happens in low dimensions. The author is supported by the Simons Foundation.


\begin{thebibliography}{aa}


\bibitem{Cerf} Cerf, Jean. \emph{La stratification naturelle des espaces de fonctions diff\'erentiables r\'eelles et le th\'eoreme de la pseudo-isotopie}. Publications Math\'ematiques de l'IH\'ES 39 (1970): 5--173.

\bibitem{HW} Hatcher, Allen, and John Wagoner. \emph{Pseudo-isotopies of compact manifolds}. Soci\'et\'e math\'ematique de France, 1973.


\bibitem{I:thesis} Igusa, Kiyoshi. \emph{The $Wh_3(\pi)$ Obstruction for Pseudoisotopy}. PhD thesis. 1978.

\bibitem{A-infty} \bysame. \emph{On the algebraic K-theory of $A_\infty$-ring spaces}. Lecture notes in Math 967: Algebraic K-theory. Springer, Berlin, Heidelberg, 1982. 146--194.

\bibitem{What happens} \bysame. \emph{What happens to the formula of Hatcher and Wagoner when the first Postnikov invariant of $M$ is nontrivial?} Lecture notes in Math 1046: Algebraic K-Theory, Number Theory, Geometry and Analysis, pp. 104--172. 1982.

\bibitem{Stability} \bysame. \emph{The stability theorem for smooth pseudoisotopies.} K-theory 2, no. 1--2 (1988).

\bibitem{Wh1b} \bysame. \emph{Second obstruction to pseudoisotopy II: dimension 5.} under revision.


\bibitem{Kwasik} Kwasik, Slawomir. \emph{Low-dimensional concordances, Whitney towers and isotopies.} Math Proc of Cambridge Phil Soc, vol. 102, no. 1, pp. 103--119 (1987).

\end{thebibliography}
\end{document}